\newcommand\abs[1]{\left\lvert{#1}\right\rvert}
\newcommand\norm[1]{\left\lVert{#1}\right\rVert}
\def\R{\mathbb{R}}
\def\sA{\mathcal{A}}
\def\sF{\mathcal{F}}
\def\sU{\mathcal{U}}
\def\sE{\mathcal{E}}
\def\eps{\varepsilon}
\def\E{\mathbb{E}}
\def\P{\mathbb{P}}
\def\one{\mathbbm{1}}
\DeclareMathSymbol{\lsb@l}{\mathalpha}{letters}{`l}
\newtheorem{defn}{Definition}
\newtheorem{assumption}{Assumption}
\newtheorem{prop}{Proposition}
\theoremstyle{plain}
\newenvironment{remark}{\noindent{\bf Remark}\hspace*{1em}}{\bigskip}
\def\Var{\text{Var}}
\newcommand{\rev}[1]{{\color{blue}{{#1}}}}
\renewcommand{\rev}[1]{{#1}}
\newcommand{\revv}[1]{{\color{blue}{{#1}}}}
\renewcommand{\revv}[1]{{#1}}
\newcommand*\samethanks[1][\value{footnote}]{\footnotemark[#1]}
\author{Billy Jin\thanks{School of Operations Research and Information Engineering, Cornell University, Ithaca, NY, USA. Email: \texttt{\{bzj3,ks2375,mx229\}@cornell.edu}.} \and Katya Scheinberg\samethanks[1] \and Miaolan Xie\samethanks[1]}
\title{High Probability Complexity Bounds for Adaptive Step Search Based on Stochastic Oracles}
\begin{document}
	
	\maketitle
	
	\begin{abstract}
		We consider a  {\em step search method}\footnote{We introduce the term {\em step search} for a class of methods, similar to line search, but where step direction can change during the back-tracking procedure.} for continuous optimization under a stochastic setting where the function values and gradients are available only through inexact probabilistic zeroth- and first-order oracles.
		Unlike the stochastic gradient method and its many variants, the algorithm does not use a pre-specified sequence of step sizes but increases or decreases the step size adaptively according to the estimated progress of the algorithm. These oracles capture multiple standard settings including expected loss minimization and zeroth-order optimization. Moreover, our framework is very general and allows the function and gradient estimates to be biased.  The proposed algorithm is simple to describe and easy to implement.  Under fairly general conditions on the oracles, we derive a high probability tail bound on the iteration complexity of the algorithm when it is applied to non-convex, convex, and strongly convex \rev{(more generally, those satisfying the PL condition)} functions. Our analysis strengthens and extends prior results for  stochastic step  and line search methods.   
	\end{abstract}
	
	\begin{keywords}
		nonlinear optimization, stochastic optimization, line search, step search, high probability, complexity bound, stochastic oracles
	\end{keywords}



	\section{Introduction}
	
		In this paper, we analyze  a {\em step search} method for minimization of an unconstrained, differentiable, possibly non-convex function $\phi: \R^n \to \R$.  
		 We make the standard assumption that $\nabla \phi$ is $L$-Lipschitz, but the knowledge of $L$ is not assumed by the algorithm.  We consider a setting where
	 neither the function value $\phi(x)$ nor  the gradient $\nabla \phi(x)$ are directly computable. Instead, the algorithm is given black-box access to the following probabilistic  oracles:
	\begin{itemize}
	\item {\bf Stochastic zeroth-order oracle (SZO($\epsilon_f, \nu, b$))}
Given  a point $x$,  the oracle computes  $f(x,\xi)$, a (random) estimate of the function value $\phi(x)$. $\xi$ is a random variable (whose distribution may depend on $x$), with probability space $(\Omega, \mathcal{F}_{\Omega}, P)$.  We assume the absolute value of the estimation error $e(x)=|f(x,\xi(x))-\phi(x)|$ (we omit the dependence on $\xi$ for brevity) to be a ``one-sided" subexponential-like random variable\footnote{This is a weaker requirement than assuming $e(x)$ to be subexponential and is sufficient for our purposes. } with parameters 
$(\nu,b)$, whose mean is bounded by some constant $\epsilon_f>0$. Specifically, 
	\begin{equation}\label{eq:zero_order}
	{\mathbb E_\xi}\left [ e(x)\right ]\leq \epsilon_f \ \text{and }\ {\mathbb E_\xi}\left [\exp\{\lambda (e(x)-\E[e(x)])\}\right ]\leq \exp\left(\frac{\lambda^2\nu^2}{2}\right), \quad \forall \lambda\in \left[0,\frac{1}{b}\right].
	 \end{equation}

In summary, the input to the oracle is $x$, the output is $f(x,\xi)$, and the values $(\epsilon_f, \nu, b)$ are intrinsic to the oracle. 
\item {\bf Stochastic first-order oracle (SFO($\epsilon_g, \tau, \kappa,\delta$)).}	 
Given a point $x$ and a constant $\alpha> 0$, the oracle computes $g(x,\xi^\prime)$, a (random) estimate of the gradient $\nabla \phi(x)$, such that
\begin{equation}\label{eq:first_order}
	{\mathbb P_{\xi^\prime}\left (\|g(x,\xi^\prime)-\nabla \phi(x)\|\leq \max \{\epsilon_g, \min\{\tau,\kappa \alpha\}\|g(x,\xi^\prime)\| \}\right)\geq 1-\delta.}
\end{equation}
In summary, the input to the oracle is $x$ and $\alpha$,  the output is $g(x,\xi^\prime)$, and the values $(\epsilon_g, \tau,\kappa,\delta)$ are intrinsic to the oracle. 

\end{itemize}

 These two oracles cover several settings, including
 \begin{itemize}
 \item  Standard supervised learning, where gradients and values of the loss function are computed based on a mini-batch. Here, the random variables $\xi$ and $\xi^\prime$ in the zeroth- and first-order oracles represent the random set of samples in the mini-batch. 
 \item Zeroth-order optimization, where gradients are estimated via randomized finite differences using (possibly noisy) function values. This arises in policy gradients in reinforcement learning, as is used in \cite{salimans2016evolution} and analyzed in \cite{berahas2019theoretical}.
 \item A variety of other settings, where the gradients and function estimates may be biased stochastic estimates of the true gradients and function values. 
 \end{itemize}
 
 Let us explain the choice of the oracle definitions here.  SFO($0,\infty, \kappa,\delta$) (i.e., the first-order oracle with  $\epsilon_g=0$ and $\tau=\infty$) was used in  \cite{CS17}  and \cite{berahas2019global} in the analysis of a stochastic step search method.\footnote{These prior papers referred to the methods as line search, however, unlike traditional line search \cite{NW} these methods choose a new stochastic gradient estimate, and thus a new search direction,  at each back-tracking step. In this paper we propose to refer to such methods as step search methods.}
 It has been discussed in those works how the oracle compares to the standard unbiased stochastic gradient estimator. In general, SFO($0,\infty, \kappa,\delta$) can produce gradient estimates with arbitrarily large bias and variance because the error $\|g(x,\xi^\prime)-\nabla \phi(x)\|$ is only bounded with some given probability. \rev{On the other hand, if  $\epsilon_g=0$, then  the oracle needs to be able to  produce an estimate $g(x,\xi^\prime)$ with an arbitrary small error (with the given probability), in the case when $\alpha$ is small. Thus   $\epsilon_g>0$ accounts for the practical limit of the  oracle in terms of accuracy (e.g. coming from the largest allowable mini-batch size). On the other hand, using $\tau=\infty$ allows the bound to become arbitrarily loose if $\alpha$ is large. This does not represent practice and thus we allow $\tau<\infty$.}

  \rev{Our choice of the zeroth-order oracle may appear somewhat convoluted at first. However, as we discuss here, it strikes a natural balance between theory and practice, and allows us to improve on prior work. 
 Specifically, in \cite{CS17}  the zeroth-order oracle was assumed to be exact, which corresponds to the case $\epsilon_f=0$  in our zeroth-oracle definition, with arbitrary $\nu$ and $b$. 
  In \cite{berahas2019global}  the zeroth-order oracle requirements were relaxed, compared to those in  \cite{CS17}, allowing the error to be deterministically bounded by some $\epsilon_f\geq 0$, which is a special case of our zeroth-order oracle with $\nu=\epsilon_f$ and $b=0$. 
 In  \cite{paquette2018stochastic}, a more general stochastic zeroth-order oracle  was used   within a stochastic step search method where the error is allowed to be arbitrarily large, but with sufficiently small variance.  This oracle condition, while relatively loose,  necessitated a more complicated and somewhat more constrained first-order oracle, and a somewhat artificial modification to the method itself.  The analysis of this method is also very different from those used in   \cite{CS17}  and \cite{berahas2019global}. 
 
 Our zeroth-order oracle essentially considers errors in the function that are not deterministically bounded, but are light tailed. Thus our framework covers a much broader class of stochastic settings than those covered in \cite{berahas2019global}. \revv{For example, in empirical risk minimization in machine learning, the assumption that the function error is light tailed holds via Hoeffding's inequality if the loss function is bounded, or if the support of the data set is bounded, the loss is Lipschitz, and the set of decision variables we consider is bounded.}
In other settings, one can argue that if the zeroth-order oracle is implemented via the averaging of random estimates over a relatively large number of samples, then the light tail behavior is justified by the Central Limit Theorem. 
 
 Finally, we would like to point out that the following alternative zeroth-oracle definition is equivalent to the one above.
 
 	{\bf Stochastic zeroth-order oracle, alternative definition} ($\mathsf{SZO^\prime}(\epsilon_f, \lambda,a)$). 
	Given  a point $x$,  the oracle computes  $f(x,\xi)$, 
	where $\xi$ is a random variable, whose distribution may depend on $x$, $\epsilon_f, \lambda$ and $a$, that satisfies 
	\begin{equation*}
	{\mathbb E_{\xi}}\left [ |{\phi(x) - f(x, \xi)}|\right ]\leq \epsilon_f \ \text{~and~ }
	{\mathbb P_{\xi}}\left ( |{\phi(x) - f(x, \xi)}|< t\right ) \geq 1-e^{\lambda (a-t)}, 
	\end{equation*}
	for any $t>0$. 
	It can be shown that having $\mathsf{SZO^\prime}(\epsilon_f, \lambda,a)$ implies also having $\mathsf{SZO}(\epsilon_f, \nu,b)$ for some constants $\nu$ and $b$ whose values depend on $\lambda$, $a$ and some universal constants (see , e.g. Proposition 2.7.1 of \cite{vershynin2018high}). Similarly, having $\mathsf{SZO}(\epsilon_f, \nu,b)$ also implies having $\mathsf{SZO^\prime}(\epsilon_f, \lambda,a)$. 
	Since  our results explicitly depend on $\nu$ and $b$, we choose to use $\mathsf{SZO}(\epsilon_f, \nu,b)$ as our zeroth-order oracle, even though $\mathsf{SZO^\prime}(\epsilon_f, \lambda,a)$ may appear to be more intuitive.}
 
 In all three prior papers on the stochastic step search method \cite{CS17,paquette2018stochastic, berahas2019global}, the  expected complexity is shown to be comparable to that of deterministic line search, in terms of its dependence of the desired convergence accuracy $\varepsilon$. The dependence on other constants, such as the Lipschitz constant of the gradient of $\phi(x)$, 
 is worse for the expected complexity in \cite{paquette2018stochastic} vs. the results in  \cite{CS17}  and \cite{berahas2019global}. In addition, because $\tau$ was chosen to be $\infty$ in those prior works, it was necessary to impose an upper bound on the step size parameter, with this value explicitly appearing in the complexity. By introducing a finite value of $\tau$, we simply constrain the worst case accuracy of the first-order oracle - a very minor condition which allows us to drop the upper bound on the step size parameter completely.  In summary, our choices of zeroth- and first-order oracles here are dictated by the key  motivations of this paper:
 \begin{itemize}
 \item Extend the analysis in \cite{berahas2019global} to the more general case where $e(x)$ is an unbounded random variable.  
 \item Use a more relaxed form of the first-order oracle than in the previous stochastic step search papers, by allowing $\epsilon_g\neq 0$.
 \item Remove the upper bound on the step size parameter from the stochastic step search method by allowing $\tau<\infty$. 
 \item Derive a simpler and stronger analysis for a simple stochastic step search method,  compared to those in \cite{paquette2018stochastic}, under somewhat stronger, but natural conditions on the zeroth-order oracles.
 \item Derive a {\em high probability} tail bound instead of only a bound on the expected complexity, showing that the complexity itself is a subexponential random variable, using ideas from \cite{gratton2018complexity}.  
 \end{itemize}

All the theory in this paper can be carried out for $\tau=\infty$. In addition, if $\tau<1$, then with small modifications the theory can be carried out  for the first-order oracle where $\min\{\tau, \kappa \alpha\}$ is replaced simply by $\tau$. However we chose to focus on condition 
\eqref{eq:first_order}, as it allows for a more adaptive oracle and is closer to what was used in prior literature.

In addition to the works mentioned above, an extension of the traditional line search  for stochastic empirical loss minimization is analyzed in  \cite{vaswanie2019painless}, where the function oracles are implemented using a random mini-batch of a fixed size. Unlike in the step search algorithm, the mini-batch remains fixed during backtracking until a standard Armijo condition is satisfied  \cite{NW}.  Thus the search direction remains the same until a step is taken just like in the standard Armijo line search. While good computational performance has been reported in \cite{vaswanie2019painless}, its theoretical analysis requires several very restrictive assumptions. Specifically, it is assumed that $f(x,\xi)$ is Lipschitz smooth for any realization of $\xi$ and that in fact for every realization $g(x,\xi)=\nabla f(x,\xi)$. This is a very strong assumption that often fails in practice. 
In addition, the analysis for non-convex $\phi(x)$ is carried out only under severe upper bounds on the step size parameter, rendering the line search essentially impractical.   
In contrast, our theoretical results are stronger than those in  \cite{vaswanie2019painless}, and we only assume Lipschitz smoothness of $\phi(x)$. 

\revv{An earlier version of the work was previously published in the NeurIPS conference \cite{jin2021high}}. It contains the analysis framework that applies to the non-convex setting and requires an upper bound of the step size parameter that affects the complexity bound. 
Here we extend the framework to cover the convex and strongly convex cases, as well as remove the upper bound on the step size parameter. 
In addition, here we remove a strong assumption on the independence of the errors in the function estimates.

{\bf  In summary}, we present an analysis of an adaptive step search algorithm  under very general conditions on the gradient and function estimates for non-convex, convex and strongly convex functions. \rev{In fact, the results for the strongly convex case also hold in the more general setting where only the PL condition is assumed.} The results not only subsume most results in the prior literature, but also substantially extend the framework. Moreover, high probability tail bounds on iteration complexity are derived, instead of only expected iteration complexity.

The paper is organized as follows. 
  The step search algorithm is presented  in Section \ref{sec:alg}.  	The algorithm generates a stochastic process with certain properties, which are stated as assumptions in Section \ref{sec:high_prob} and are shown to imply the main complexity bound. In Section \ref{three_setting}, we show how these properties in fact hold when the algorithm is applied to non-convex, convex and strongly convex $\phi(x)$.
  In Section \ref{oracles},  we present a brief discussion on the oracles in  two practical settings. Computational experiments on empirical risk minimization are presented in Section \ref{sec:experiments}, and the final conclusions are in Section \ref{sec:conclusion}.

	\section{Stochastic adaptive step search algorithm and related notations}
	\label{sec:alg}
	
	In this paper we impose the following standard assumption on $\phi(x)$.	
	\begin{assumption}\label{ass:Lipschitz}
		$\nabla \phi$ is $L$-Lipschitz continuous and $\phi$ is bounded from below by some constant $\phi^*$. 
	\end{assumption}

		We consider the step search algorithm proposed by \cite{berahas2019global}, which is an extension of the step search algorithm in \cite{CS17} to the setting of inexact function estimates. In both algorithms, a random gradient estimate is used to attempt a step. Compared to \cite{CS17}, the key modification of the algorithm in  \cite{berahas2019global} is the relaxation of the Armijo condition by an additive constant $2\epsilon_f^\prime$. The difference between this algorithm and the more standard line search methods such as the ones in  \cite{NW} and  \cite{vaswanie2019painless} is that 
		the gradient estimate is recomputed at each iteration, whether or not a step is accepted.  
				
		The algorithm is presented below.\footnote{A similar algorithm was named ALOE  in the earlier version of this paper, before we decided to adopt the step search vs. line search terminology.}
		{
	\begin{algorithm}[ht]
		\caption{~\textbf{Stochastic Adaptive Step Search (SASS)}}
		\label{alg:ls}
		{\bf Input:}  Oracles SZO($\epsilon_f, \nu, b$) and  SFO($\epsilon_g, \tau, \kappa,\delta$), starting point $x_0$, initial step size $\alpha_0$,  constants $\theta, \gamma \in (0,1)$ and $\epsilon_f^\prime \geq 0$.  
		\begin{algorithmic}[1]
			\For{$k=0,1,2,\dots$}
			\State  {\bf Compute gradient approximation ${g_k}$:} \label{step:grad_approx}
			
			\vspace{1mm}
			
			\hspace{0.5cm} \begin{minipage}{\textwidth-3cm}Generate the  direction $g_k=g(x_k,\xi_k^\prime)$ using the $(\epsilon_g, \tau, \kappa,\delta)$ probabilistic first-order oracle, with $\alpha=\alpha_k$. 
			\end{minipage}
			\vspace{1mm}
			
			\State {\bf Check sufficient decrease:} 
			
			\vspace{1mm}
			
			\hspace{0.5cm} \begin{minipage}{\textwidth-3cm}Let $x_k^+=x_k - \alpha_k g_k$. Generate 
				$f(x_k,\xi_k)$ and $ f(x_k^+,\xi_k^+)$ using the $(\epsilon_f, \nu, b)$ probabilistic zeroth-order oracle. 
				Check the modified {\em Armijo} condition:
			\end{minipage}
			\begin{equation}
				\label{eq:suff_decr}
				f(x_k^+,\xi_k^+) \leq f(x_k,\xi_k) - \alpha_k\theta\norm{g_k}^2 + 2\epsilon_f^\prime.
			\end{equation}
			
			\State {\bf Successful step:} 
			
			\vspace{0.5mm}
			
			\hspace{0.5cm}  If (\ref{eq:suff_decr}) holds, then set $x_{k+1} \gets x_k^+ $ and $\alpha_{k+1} \gets \gamma^{-1}\alpha_k$.
			
			\vspace{0.5mm}
			
			\State {\bf Unsuccessful step:} 
			
			\vspace{0.5mm}
			
			\hspace{0.5cm} Otherwise, set $x_{k+1} \gets x_k$ and $\alpha_{k+1} \gets \gamma \alpha_k$. 
			\EndFor
		\end{algorithmic}
	\end{algorithm}
}
	
It is important to note, that while stochastic oracles SZO and SFO are inputs  to the algorithm (since the algorithm uses them to compute function and gradient estimates) the
values intrinsic to the oracles, $\epsilon_f, \nu, b,\epsilon_g, \tau, \kappa$ and $\delta$ are not known to the algorithm.
As we will see later, these values will affect the convergence properties of the algorithm, but in principle the algorithm is implemented without knowing the values. 
 In contrast all input parameters are user controlled. 
The input $\epsilon_f^\prime$ here is only required to be some upper bound for $\E{[e(x)]}$, not necessarily the tightest one.
We have the following assumption on $\epsilon_f^\prime$.
\begin{assumption}\label{epsilonf}
	$\epsilon_f^\prime \geq \epsilon_f$.
\end{assumption}
	Our computational results in Section \ref{sec:experiments} indicate that estimating $\epsilon_f^\prime$ is relatively easy in practice.  
	\rev{We further note that although the definition of the SZO requires the noise in the individual function estimates to be subexponential, in fact we only need the noise in the difference $\abs{f(x,\xi) - f(x^+,\xi^+) -(\phi(x) - \phi(x^+)) }$ to be subexponential, since it is really the noise of the difference that needs to be controlled in the analysis.}

	Due to the random nature of the function oracles, the algorithm generates a stochastic process. In the next section we  present an analysis that derives a high probability bound on the stopping time of this process, using key properties satisfied by the algorithm. We discuss these properties  in the next section, based on the notation introduced below. 
	
	
A key concept that will be used in the analysis in the concept of a {\em true iteration}. 
	
	\begin{defn}[True iteration]
		\label{def:true_noisy}
		We say that iteration $k$ is \textbf{true} if  
		$$
		\|g_k-\nabla \phi(x_k)\|\leq \max \{\epsilon_g, \min\{\tau,\kappa \alpha_k\} \|g_k\|\}\quad \text{and}\quad | f(x_k,\xi_k)-\phi(x_k)| + |f(x^+_k,\xi^+_k)-\phi(x^+_k)| \leq 2\epsilon_f^\prime,$$
		and is \textbf{false} otherwise. 
	\end{defn}
	
	Let $M_k$ denote the triple $\{\Xi_k, \Xi_k^+, \Xi_k^\prime\}$, whose realizations are  $\{\xi_k, \xi_k^+, \xi_k^\prime\}$. Algorithm \ref{alg:ls} 	generates {a stochastic process $\{({G}_k, {E}_k, {E}_k^+, X_k, A_k)\}$ 
	with realizations $(g_k, e_k, e_k^+, x_k, \alpha_k)$ adapted to the filtration $\{{\cal F}_k:\, k\geq 0\}$, where ${\cal F}_k=\sigma (M_0, M_1, \ldots, M_k)$. At iteration $k$, $G_k$ is the random gradient, $E_k, E_k^+$ are the random noises in absolute value of the zeroth-order oracle at $x_k$ and $x_k^+$, $X_k$ is the random iteration point at step $k$ and $A_k$ is the random step size. 
	Note that ${G}_k$ are dictated by $\Xi_k^\prime$ in the first-order oracle, and ${E}_k, {E}_k^+$ is dictated by $\Xi_k, \Xi_k^+$ in the zeroth-order oracle.}
	We define the following random variables, measurable with respect to ${\cal F}_k$. 
		\begin{itemize}
		\item $I_k := \one\{\text{iteration $k$ is true}\}.$ 
		\item $\Theta_k := \one\{\text{iteration $k$ is successful}\}$.
	\end{itemize}
		
		Next, we define the stopping time for the stochastic process generated by the algorithm, which is the quantity we want to bound. 
	
		\begin{defn}[Stopping time]
		
		\begin{itemize}
			\item If {$\phi$} is non-convex: 
			For $\varepsilon > 0$,	$T_\varepsilon:=\min\{k: \norm{\nabla \phi(x_k)} \leq \varepsilon\}$, the iteration complexity of the algorithm for reaching $\varepsilon$-stationary point.
			\item If {$\phi$} is strongly convex: For $\varepsilon > 0$, 	$T_\varepsilon:=\min\{k: \phi(X_k) - \phi^\star \leq \varepsilon\}$,   the number of iterations until $\phi(X_k) - \phi^\star\ \leq \varepsilon$ 
			occurs for the first time. Here, $\phi^\star = \phi(x^\star)$, where $x^\star$ is a global minimizer of {$\phi$}.
			\item If $\phi$ is convex, let $\varepsilon=(\varepsilon_0,\varepsilon_1)$ with $\varepsilon_0,\varepsilon_1>0$, $T_\varepsilon:=\min\{k: \phi(X_k) - \phi^\star \leq \varepsilon_0\ \text{or\ }
			\norm{\nabla \phi(x_k)} \leq \varepsilon_1 \}$, is the number of iterations until either $\phi(X_k) - \phi^\star\ \leq \varepsilon_0$ or $\norm{\nabla \phi(x_k)} \leq \varepsilon_1$ occurs for the first time.
	
		\end{itemize}
		We will refer to $T_\varepsilon$ as the \emph{stopping time} of the algorithm.
	\end{defn}
	
	It is easy to see that $T_\varepsilon$  is a \emph{stopping time} of the stochastic process with respect to ${\cal F}_k$. \revv{Moreover, note that the stopping time for the convex case is in terms of both the optimality gap and the gradient norm. This is because in the convex case (unlike the strongly convex case), the optimality gap does not provide a lower bound on the gradient norm. Thus, if the gradient becomes too small, due to the bias, the first-order oracle can always provide gradient estimates in the opposite direction to the true gradient, and hence make it impossible for the algorithm to progress any further.}
	
		
		Finally, we define the random variable $Z_k$ to measure progress towards optimality.
	\begin{defn}[Measure of Progress]
		\label{def:progress} For each $k \geq 0$, let $Z_k \geq 0$ be a random variable that measures the progress of the algorithm at step $k$.  The definition of $Z_k$ depends on the convexity of $\phi$. The corresponding definitions for each case are shown in the table below:
		\begin{table}[h!]
			\caption{ Definitions of $Z_k$.}
			\label{tbl:prog_upper}
			\centering 
			\begin{tabular}{lc}
				\toprule
				\textbf{Function} &
				\textbf{$\pmb{Z_k}$}  \\  \midrule
				\textbf{convex} &  $\frac{1}{\varepsilon_0} - \frac{1}{\phi(X_k) - \phi^*}$\\ \hdashline
				\textbf{non-convex} &  $\phi(X_k) - \phi^*$   \\ 
				\hdashline
				\textbf{strongly convex} & $\ln \left( \frac{\phi(X_k) - \phi^*}{\varepsilon} \right)$   \\
				\bottomrule
			\end{tabular}
		\end{table}
		This is analogous to a corresponding definition in \cite{berahas2019global}, except for a change in direction; for us, making progress means decreasing $Z_k$, whereas in \cite{berahas2019global}, making progress means increasing $Z_k$. 
	\end{defn}

	In the next section, using
	properties of processes $\{I_k\}$, $\{\Theta_k\}$ and  $\{Z_k\}$ we derive a high probability tail bound for  $T_\varepsilon$, and thus obtain a high probability bound on the  iteration complexity  for Algorithm \ref{alg:ls} when applied to non-convex, convex and strongly convex functions.

	\section{Analysis framework for the high probability bound}
	\label{sec:high_prob}	
In this section we present the main ingredients underlying the theoretical analysis. We first state general conditions on the stochastic process (Assumption \ref{ass:alg_behave}), from which we are able to derive a high probability tail bound on the iteration complexity.
They are listed as assumptions first, and in the following sections, we will show that they indeed hold for Algorithm \ref{alg:ls} when applied to non-convex, convex and strongly convex smooth functions $\phi$. 


	\begin{restatable}
	[Properties of the stochastic process]{assumption}{algass}
		\label{ass:alg_behave}
		There exist a constant $\bar\alpha > 0$ and a non-decreasing function $h: \R\to\R$, which satisfies $h( \alpha)>0$ for any $\alpha>0$,
		{and a function $r: \R^2 \to \R$ which is non-decreasing and concave in its second argument,} and a constant $p \in ( \frac12 + \frac{{r(\epsilon_f^\prime, 2\epsilon_f^\prime)}}{h(\bar{\alpha})}, 1]$.
		such that the following hold for all $k<T_\varepsilon$:
		\begin{itemize}
		\item[(i)] $h(\bar{\alpha})>{\frac{r(\epsilon_f^\prime, 2\epsilon_f^\prime)}{p-\frac 1 2}}$. (The lower bound of potential progress an iteration with step size $\bar \alpha$ can make.)
		\item[(ii)] $\P(I_k = 1 \mid \sF_{k-1}) \geq p$ for all $k$. (Conditioning on the past, the next iteration is true with probability at least $p$.)
		      \item[(iii)]   If  $I_k\Theta_k=1$ then $Z_{k+1}\leq Z_k-h(A_k)+{r(\epsilon_f^\prime, 2\epsilon_f^\prime)}$. (True, successful iterations make progress.)
		       \item[(iv)] If $A_k  \leq  \bar \alpha$ and $I_k=1$  then $\Theta_k=1$. (Small and true iterations are also successful.) 
		       \item[(v)] $Z_{k+1}\leq Z_k+{r(\epsilon_f^\prime, {E}_k+{E}_k^+)}$ for all $k$. (The ``damage'' incurred at each iteration is bounded above.)	       
		 \end{itemize}
	\end{restatable}

	The following key lemma follows easily from Assumption \ref{ass:alg_behave} (ii) and the Azuma-Hoeffding inequality \rev{\cite{azuma1967weighted}} applied to the submartingale $\sum_{k=0}^{t-1} I_k-pt$.  
	\begin{lemma}	\label{lem:azumaheoffding}
                For all $t\geq 1$, and any $\hat{p} \in [0, p)$, we have  
		$$\P\left(\sum_{k=0}^{t-1} I_k\leq \hat{p}t\right) \leq \exp\left( - \frac{(p-\hat{p})^2}{2p^2} t \right).$$
				
	\end{lemma}

	We now define another indicator variable that will be used in the analysis. 
\begin{defn}[Large step]
	For all integers $k \geq 0$, define the random variable $U_k$  as follows:
	\begin{align*}
		U_k &=
		\begin{cases}
			1, &\text{if $\min\{A_k, A_{k+1}\} \geq \bar{\alpha}$,} \\
			0, &\text{if $\max\{A_k, A_{k+1}\} \leq \bar{\alpha}$.}
		\end{cases}
	\end{align*}
	We will say that step $k$ is a \textbf{large step} if $U_k = 1$. Otherwise, step $k$ is a \textbf{small step}. Note that $U_k$ is adapted to the filtration $\mathcal{F}_k$, since $A_{k+1}$ is completely determined by $A_k$ and $\Theta_k$, which are both in $\mathcal{F}_k$.
\end{defn}
\rev{Without loss of generality (by possibly decreasing the value of $\bar{\alpha}$ by at most a factor of $\gamma$), we can assume that $\bar{\alpha} = \alpha_0 \gamma^m$ for some integer $m$.}  \revv{Then, using the dynamics of the process and considering all possible cases, it can be shown that every step is either a large step or a small step, and that the two possibilities are mutually exclusive.} 

Our analysis will rely on the following key observation: By Assumption \ref{ass:alg_behave}, if iteration $k$ has $U_k\Theta_kI_k=1$, then $Z_k$ gets reduced by at least 
$h(\bar \alpha)-{r(\epsilon_f^\prime, 2\epsilon_f^\prime)} > 0.$ We call such an iteration a {\bf {\em good}} iteration, because it makes progress towards optimality by at least a fixed amount. On the other hand, on any other iteration $k$, $Z_k$ can increase by at most ${r(\epsilon_f^\prime, E_k+E_k^+)}$. The idea of the analysis is to show that with high probability, the progress made by the good iterations dominates the damage caused by the other iterations. The crux of the proof is to show that with high probability, a large enough constant fraction of the iterations are good (up to another additive constant).

The engine of the analysis is a key lemma showing that if the stopping time has not been reached  and a large enough number of iterations are true, then there must be a large number of good iterations. 

	To prove the key lemma, we will first prove two additional lemmas. 
	The first lemma shows that the number of large and successful iterations is bounded below by the number of large and unsuccessful ones up to a constant. 	
	\begin{lemma}
		\label{lem:upper_updown}
		Let $d =  \max\left\{-\frac{\ln \alpha_0 - \ln \bar{\alpha}}{\ln \gamma},\; 0 \right\}$. For any positive integer $t$, we have
		$$
		\sum_{k=0}^{t-1} U_k \Theta_k \geq \sum_{k=0}^{t-1} U_k(1-\Theta_k) - d.
		$$
	\end{lemma}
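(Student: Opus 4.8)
The plan is to track, on a logarithmic scale, how many steps the current step size $\alpha_k$ sits above the threshold $\bar{\alpha}$, and to run a pathwise telescoping (potential) argument; note that the statement is deterministic for a fixed realization, so no concentration inequality is needed. Concretely, I would define $\beta_k := \frac{\ln(\alpha_k/\bar{\alpha})}{\ln(1/\gamma)}$, so that $\beta_k \geq 0$ exactly when $\alpha_k \geq \bar{\alpha}$. The step-size dynamics translate into a simple $\pm 1$ walk for $\beta_k$: on an unsuccessful step $\alpha_{k+1} = \gamma \alpha_k$, giving $\beta_{k+1} = \beta_k - 1$ exactly, while on a successful step $\alpha_{k+1} = \min\{\amax, \gamma^{-1}\alpha_k\} \leq \gamma^{-1}\alpha_k$, giving $\beta_{k+1} \leq \beta_k + 1$ (equality unless the cap $\amax$ is active). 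The object I would actually telescope is the saturated potential $\Phi_k := \max\{\beta_k, 0\}$, whose initial value is exactly $\Phi_0 = \max\{\beta_0, 0\} = d$ and which is always nonnegative.

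Next I would compute the per-step change $\Phi_{k+1} - \Phi_k$ by step type, using the large/small dichotomy. On a large step both $\alpha_k, \alpha_{k+1} \geq \bar{\alpha}$, hence $\beta_k, \beta_{k+1} \geq 0$ and $\Phi_{k+1} - \Phi_k = \beta_{k+1} - \beta_k$; this equals $-1$ when the step is unsuccessful (these are counted by $\sum_{k=0}^{t-1} U_k(1-\Theta_k)$) and is at most $+1$ when it is successful (counted by $\sum_{k=0}^{t-1} U_k \Theta_k$). On a small step both $\alpha_k, \alpha_{k+1} \leq \bar{\alpha}$, so $\beta_k, \beta_{k+1} \leq 0$ and $\Phi_k = \Phi_{k+1} = 0$, i.e.\ small steps contribute nothing. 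Summing $\Phi_{k+1} - \Phi_k$ over $k = 0, \dots, t-1$ then yields $\Phi_t - \Phi_0 \leq \sum_{k=0}^{t-1} U_k\Theta_k - \sum_{k=0}^{t-1} U_k(1-\Theta_k)$.

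Finally, since $\Phi_t \geq 0$ and $\Phi_0 = d$, the left-hand side is at least $-d$, and rearranging gives precisely the claim $\sum_{k=0}^{t-1} U_k\Theta_k \geq \sum_{k=0}^{t-1} U_k(1-\Theta_k) - d$. The one point that requires care — and the whole reason for using the saturated potential $\max\{\beta_k, 0\}$ rather than $\beta_k$ itself — is the cap at $\amax$: a successful step need not raise $\beta_k$ by a full unit, so one can only assert the upper bound $\Phi_{k+1} - \Phi_k \leq 1$, not equality, but this upper bound is exactly what the counting needs. I would also explicitly invoke the already-noted fact that every step is either large or small (never straddling $\bar{\alpha}$), which is what guarantees that the non-large steps drop out of the telescoping sum with zero contribution.
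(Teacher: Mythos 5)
Your proof is correct and rests on the same observation as the paper's: on a logarithmic scale the step size performs a $\pm 1$ walk, so every large unsuccessful step (a down-move above $\bar{\alpha}$) must be offset by a large successful step (an up-move), except for an initial allowance of $d$ down-moves needed to go from $\alpha_0$ to $\bar{\alpha}$. The only difference is presentational: the paper states this as an informal pairing/charging argument, whereas your saturated potential $\Phi_k=\max\{\beta_k,0\}$ with telescoping makes the same counting rigorous, including the exact identification $\Phi_0=d$ and the correct one-sided treatment of the cap at $\amax$.
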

	\begin{proof}
		The proof follows simply from the fact that  any unsuccessful step decreases the step size by a factor of $\gamma$, while any large successful step increases the step by a factor of $\gamma^{-1}$.  Since a large step at iteration $k$ has both $\alpha_k$ and $\alpha_{k+1}$ bounded from below by $\bar \alpha$, every time $\alpha_k$ gets decreased has to correspond to  a large step where it gets increased, except for at most $ \max\{-(\ln \alpha_0 - \ln \bar \alpha)/\ln \gamma, 0\}$ iterations, which is the number of unsuccessful steps it takes to decrease the step size from $\alpha_0$ to $\bar \alpha$. 
	\end{proof}
	Without loss of generality, one may assume $\alpha_0\geq \bar\alpha$, as $\alpha _0$ can be chosen to be large, \rev{and if $\alpha_0< \bar\alpha$, one can simply take $\bar\alpha= \alpha_0$ in the analysis}.
	\begin{corollary} 
		\label{cor:upper_success}
		From Lemma \ref{lem:upper_updown} we have
		$$\sum_{k=0}^{t-1} U_k\Theta_k \geq \frac12 \left(\sum_{k=0}^{t-1} U_k - d \right). $$
	\end{corollary}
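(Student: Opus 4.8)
The plan is to derive this immediately from Lemma~\ref{lem:upper_updown} by a one-line algebraic manipulation, exploiting the fact that $\Theta_k$ is a $\{0,1\}$-valued indicator. The key observation is that for each $k$ we have $U_k\Theta_k + U_k(1-\Theta_k) = U_k$, regardless of the value of $\Theta_k$. Summing over $k$ therefore gives the identity
\[
\sum_{k=0}^{t-1} U_k\Theta_k + \sum_{k=0}^{t-1} U_k(1-\Theta_k) = \sum_{k=0}^{t-1} U_k,
\]
which lets me eliminate the ``large and unsuccessful'' count in favor of the ``large'' count and the ``large and successful'' count.

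Concretely, I would write $A := \sum_{k=0}^{t-1} U_k\Theta_k$ and $B := \sum_{k=0}^{t-1} U_k(1-\Theta_k)$, so that $A + B = \sum_{k=0}^{t-1} U_k$. Lemma~\ref{lem:upper_updown} states $A \geq B - d$. Substituting $B = \sum_{k=0}^{t-1} U_k - A$ into this inequality yields $A \geq \sum_{k=0}^{t-1} U_k - A - d$, and rearranging gives $2A \geq \sum_{k=0}^{t-1} U_k - d$, i.e.\ $A \geq \tfrac12\bigl(\sum_{k=0}^{t-1} U_k - d\bigr)$, which is exactly the claim. There is no real obstacle here: the corollary is a purely mechanical rearrangement of the preceding lemma, and the only ``insight'' required is recognizing that $A+B$ telescopes to the total number of large steps. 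I would present it as a two- or three-line computation.
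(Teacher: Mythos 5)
Your proof is correct and is precisely the (implicit) argument the paper intends: the corollary is stated without proof as an immediate consequence of Lemma \ref{lem:upper_updown}, and your substitution of $B = \sum_{k=0}^{t-1} U_k - A$ into $A \geq B - d$ is the standard rearrangement that yields it.
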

	%
	
	The next Lemma is an analogue of Lemma \ref{lem:upper_updown} for the \emph{small} steps, it states that   the number of small true steps is upper-bounded by the number of small false steps.
	
	\begin{lemma}
		\label{lem:lower_updown}
		For any positive integer $t < T_\varepsilon$, we have:
		$$\sum_{k=0}^{t-1} (1-U_k)I_k \leq \sum_{k=0}^{t-1} (1-U_k)(1-I_k).$$
	\end{lemma}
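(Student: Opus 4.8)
The plan is to mirror the proof of Lemma~\ref{lem:upper_updown}, but now tracking the step size inside the \emph{small-step} regime where $\alpha_k \le \bar\alpha$. The two facts I would extract first are: (a) by Assumption~\ref{ass:alg_behave}(iv), any small true iteration is successful, since $\alpha_k \le \bar\alpha$ together with $I_k=1$ forces $\Theta_k=1$; equivalently, by contraposition, every small \emph{unsuccessful} iteration is false; and (b) by the step-size dynamics, a small successful iteration multiplies $\alpha_k$ by $\gamma^{-1}$ while a small unsuccessful one multiplies it by $\gamma$. For a successful small step the cap in $\min\{\alpha_{\max}, \gamma^{-1}\alpha_k\}$ cannot bind, because otherwise $\alpha_{k+1}=\alpha_{\max}>\bar\alpha$ would contradict $U_k=0$; hence on small steps the map $\ln\alpha_k \mapsto \ln\alpha_{k+1}$ moves by exactly $\pm(-\ln\gamma)$, increasing on successful steps and decreasing on unsuccessful ones.

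Next I would decompose $\{0,\dots,t-1\}$ into its maximal runs of consecutive small steps. Using the observation (already needed to justify that every step is either large or small) that the step size can only cross the threshold $\bar\alpha$ through the exact value $\bar\alpha$, each such run is entered at step size exactly $\bar\alpha$ and, being a run of small steps, never exceeds $\bar\alpha$ thereafter, so in particular it also ends at step size $\le\bar\alpha$. Telescoping $\ln\alpha_{k+1}-\ln\alpha_k$ across a single run therefore gives $(-\ln\gamma)\bigl(\#\text{increases} - \#\text{decreases}\bigr) = \ln\alpha_{\text{end}} - \ln\bar\alpha \le 0$, so within the run the number of successful small steps is at most the number of unsuccessful ones. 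Combining with fact~(a) yields the chain: the number of small true steps $\le$ the number of small successful steps $\le$ the number of small unsuccessful steps $\le$ the number of small false steps. Summing this inequality over all maximal small runs gives exactly $\sum_{k=0}^{t-1}(1-U_k)I_k \le \sum_{k=0}^{t-1}(1-U_k)(1-I_k)$.

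The main obstacle is purely the bookkeeping at the threshold $\bar\alpha$: I must rule out a small run that begins at a step size strictly below $\bar\alpha$, since in that case telescoping would only bound $\ln\alpha_{\text{end}} - \ln\alpha_{\text{start}}$, which can be positive and would break the count. This is handled by the convention (stated after Lemma~\ref{lem:upper_updown}) that we may take $\alpha_0 \ge \bar\alpha$, so the process starts in the large regime and every small run is entered from a large step at step size exactly $\bar\alpha$. The only remaining care is to confirm the clean dichotomy between large and small steps at the boundary, i.e.\ that transitions occur precisely when $\alpha_k=\bar\alpha$, which follows from the same one-step factor-$\gamma^{\pm 1}$ dynamics together with the non-strict inequalities in the definition of $U_k$.
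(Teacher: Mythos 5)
Your proposal is correct and takes essentially the same route as the paper's proof: the chain (small true) $\leq$ (small successful) $\leq$ (small unsuccessful) $\leq$ (small false), where the outer inequalities come from Assumption~\ref{ass:alg_behave}(iv) and its contrapositive, and the middle inequality comes from the step-size dynamics. Your maximal-run decomposition with telescoping is just a more detailed justification of that middle inequality, which the paper asserts in a single sentence (``the number of small steps where $\alpha_k$ is increased is bounded by the number of small steps where $\alpha_k$ is decreased''), and your handling of the boundary via the convention $\alpha_0 \geq \bar\alpha$ matches the paper's parenthetical remark after Lemma~\ref{lem:upper_updown}.
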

	\begin{proof}
		We have
		\begin{align*}
			\sum_{k=0}^{t-1} (1-U_k)I_{k} \leq \sum_{k=0}^{t-1} (1-U_k)\Theta_{k} 
			\leq \sum_{k=0}^{t-1} (1-U_k)(1-\Theta_{k}) 
			\leq  \sum_{k=0}^{t-1} (1-U_k)(1-I_{k}).
		\end{align*}
		The first inequality follows from  Assumption \ref{ass:alg_behave} (iv), which implies that the number of small successful iterations is at least the number of small true iterations. The second inequality follows from the fact that the number of small steps where $\alpha_k$ is increased is bounded by the number of small steps where $\alpha_k$ is decreased. \revv{This is similar to the reasoning of Lemma 3.2, except there is no additive term because $\alpha_0 \geq \bar{\alpha}$.} The third inequality again uses Assumption \ref{ass:alg_behave} (iv), since any small unsuccessful has to be false.
	\end{proof}
	We are now ready to prove the key lemma.
		\begin{restatable}{lemma}{mainlemma}
		\label{lem:true_is_good}
		For any positive integer $t$ and any $\hat{p} \in (\frac12, 1]$, we have
		$$\P\left(\text{$T_\varepsilon > t$ and $\sum_{k=0}^{t-1}I_k \geq \hat{p}t$ and  $\sum_{k=0}^{t-1}U_k\Theta_kI_k < \left(\hat{p}-\frac12\right)t - \frac{d}{2}$ }\right) = 0,$$
		where $d = \max\left\{ - \frac{\ln \alpha_0 - \ln \bar{\alpha}}{\ln\gamma}, 0\right\}$. 
	\end{restatable}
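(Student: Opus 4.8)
The statement is really deterministic: it asserts that the three listed events are jointly impossible, so the probability is $0$ simply because the underlying set of realizations is empty. Accordingly, the plan is to fix an arbitrary realization on which both $T_\varepsilon > t$ and $\sum_{k=0}^{t-1} I_k \ge \hat p t$ hold, and to show that on every such realization $\sum_{k=0}^{t-1} U_k\Theta_k I_k \ge (\hat p - \tfrac12)t - \tfrac d2$, which contradicts the third event. To organize the counting I would partition the indices $0 \le k < t$ according to the three indicators. Writing $N_U = \sum_k U_k$ for the number of large steps, $N_S = \sum_k (1-U_k) = t - N_U$ for the small steps, and $N_I = \sum_k I_k$ for the true iterations, I introduce the refined counts $N_S^{\mathrm{t}} = \sum_k (1-U_k)I_k$ (small and true), $N_U^{\mathrm{u}} = \sum_k U_k(1-\Theta_k)$ (large and unsuccessful), and $N_U^{\mathrm{tu}} = \sum_k U_k I_k (1-\Theta_k)$ (large, true, and unsuccessful).

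The first step is a clean accounting identity for the number of good iterations $G := \sum_k U_k\Theta_k I_k$. Splitting the true iterations into large and small, and the large true ones into successful and unsuccessful, gives $N_I = G + N_U^{\mathrm{tu}} + N_S^{\mathrm{t}}$, that is, $G = N_I - N_S^{\mathrm{t}} - N_U^{\mathrm{tu}}$. Since $N_I \ge \hat p t$ by hypothesis, it therefore suffices to prove the purely deterministic bound $N_S^{\mathrm{t}} + N_U^{\mathrm{tu}} \le \tfrac12 t + \tfrac12 d$; the conclusion follows immediately by subtraction.

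The two terms are then controlled by the two preparatory lemmas, one for each step type. For the small steps, Lemma \ref{lem:lower_updown} (which is where the hypothesis $t < T_\varepsilon$ enters) gives $N_S^{\mathrm{t}} \le \sum_k (1-U_k)(1-I_k) = N_S - N_S^{\mathrm{t}}$, hence $N_S^{\mathrm{t}} \le \tfrac12 N_S$. For the large steps, $N_U^{\mathrm{tu}} \le N_U^{\mathrm{u}}$ trivially, while Lemma \ref{lem:upper_updown} gives $N_U - N_U^{\mathrm{u}} = \sum_k U_k\Theta_k \ge N_U^{\mathrm{u}} - d$, so $N_U^{\mathrm{u}} \le \tfrac12(N_U + d)$. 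Adding the two estimates and using $N_U + N_S = t$ yields $N_S^{\mathrm{t}} + N_U^{\mathrm{tu}} \le \tfrac12 N_S + \tfrac12(N_U + d) = \tfrac12 t + \tfrac12 d$, exactly as required.

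The only conceptual subtlety — and the step I would flag as the crux — is that a large true iteration need not be good: Assumption \ref{ass:alg_behave}(iv) forces success only when $\alpha_k \le \bar\alpha$, so a step with $\alpha_k > \bar\alpha$ can be true yet unsuccessful. This is why $G$ is not simply the count of large true iterations, and why the term $N_U^{\mathrm{tu}}$ must be carried separately and bounded through the up/down balance of Lemma \ref{lem:upper_updown}. The remaining work is the elementary bookkeeping above, and the clean cancellation into $\tfrac12 t + \tfrac12 d$ relies only on the identity $N_U + N_S = t$ together with the fact that both preparatory lemmas each contribute a factor of one half.
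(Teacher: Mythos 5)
Your proof is correct and takes essentially the same route as the paper: both reduce the probabilistic claim to a deterministic counting argument in which $T_\varepsilon > t$ licenses Lemma \ref{lem:lower_updown} for the small steps, Lemma \ref{lem:upper_updown} (equivalently Corollary \ref{cor:upper_success}) handles the large steps, and the hypothesis $\sum_{k} I_k \geq \hat{p}t$ closes the count. Your bookkeeping via the exact identity $N_I = G + N_U^{\mathrm{tu}} + N_S^{\mathrm{t}}$, with each preparatory lemma contributing a factor of one half on its own step type, is a slightly cleaner rearrangement of the paper's chain of inequalities, but the substance is identical.
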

	\begin{proof}
		It suffices to show that the two events $T_\varepsilon > t$ and $\sum_{k=0}^{t-1}I_k \geq \hat{p}t$ together imply the event $\sum_{k=0}^{t-1}U_k\Theta_kI_k \geq \left(\hat{p} - \frac12\right)t - \frac{d}{2}$. In the remainder of the proof, assume that $T_\varepsilon > t$ and $\sum_{k=0}^{t-1}I_k \geq \hat{p}t$.
		
		Among the first $t$ steps, let
		\begin{itemize}
			\item $L_t=\sum_{k=0}^{t-1} U_kI_{k}$ be the number of true large steps,
			\item $L_f=\sum_{k=0}^{t-1} U_k(1-I_{k})$  be the number of false large steps,
			\item $S_t=\sum_{k=0}^{t-1} (1-U_k)I_{k}$ be the number of true small steps,
			\item $S_f=\sum_{k=0}^{t-1} (1-U_k)(1-I_{k})$ be the number of false small steps,
			\item $L = L_t+L_f$ be the number of large steps,
			\item $S = S_t + S_f$ be the number of small steps.
		\end{itemize}
		Observe that $L+S = t$, because every step is either large or small. Moreover, since $\sum_{k=0}^{t-1} I_k \geq \hat{p}t$, this implies
		\begin{equation}
			L_f+S_f = \sum_{k=0}^{t-1} (1-I_k) \leq t-\hat{p}t.
			\label{eq:1}
		\end{equation}  
		Also, from Lemma \ref{lem:lower_updown} and the fact that $t < T_\varepsilon$, we know that 
		\begin{equation}S_t=\sum_{k=0}^{t-1} (1-U_k)I_{k} \leq  \sum_{k=0}^{t-1} (1-U_k)(1-I_{k})=S_f
			\label{eq:2}
		\end{equation}

		Now, recall from Corollary \ref{cor:upper_success} that the number of large, successful steps is  $ \sum_{k=0}^{t-1} U_k\Theta_k \geq \frac12 \left(L - d \right)$. 
		Also, note that
		$
		\sum_{k=0}^{t-1} U_k\Theta_k= \sum_{k=0}^{t-1} U_k\Theta_k I_k +  \sum_{k=0}^{t-1} U_k\Theta_k(1-I_k).
		$
		This implies that the number of large, successful, true steps is at least 
		\begin{align*}
			\sum_{k=0}^{t-1} U_k\Theta_k I_k
			&\geq \frac{L}{2} - \frac{d}{2} -  \sum_{k=0}^{t-1} U_k\Theta_k(1-I_k)
			\geq \frac{L}{2} - \frac{d}{2} -  \sum_{k=0}^{t-1} U_k(1-I_k)  \\[5pt]
			&= \frac{L}{2} - \frac{d}{2} -  L_f  
			= \frac{t-S_t-S_f}{2} - \frac{d}{2} - L_f \\[5pt]
			&\geq \frac{t-S_t-S_f}{2} - \frac{d}{2} - ((1-\hat{p
			})t-S_f) \qquad \text{(by \ref{eq:1})}\\[5pt]
			&= \frac{S_f-S_t}{2} + \left(\hat{p} - \frac12\right)t - \frac{d}{2} \\[5pt]
			&\geq \left(\hat{p} - \frac12\right)t - \frac{d}{2} \qquad \text{(by \ref{eq:2})} 
		\end{align*}
	\end{proof}

	\subsection{Bounded noise case}
	In \cite{CS17} and \cite{berahas2019global}, the \emph{expected} iteration complexity of the step search algorithm is bounded under the assumptions that $e(x)= 0$ and 
	$e(x)\leq \epsilon_f$ for all $x$, respectively. Let $\epsilon_f^\prime \geq \epsilon_f$. We now derive a \emph{high probability} tail bound on the iteration complexity under the assumption that $e(x)\leq \epsilon_f\leq \epsilon_f^\prime$ for all $x$. 
	We consider this case separately, because its analysis will inform the analysis of the general case of unbounded noise. In addition, in the unbounded noise case, we will need an additional assumption that the noise on different iterations is independent. Here, however, we allow for any type of noise, including adversarial.

Under Assumption \ref{ass:alg_behave} (iii) and (v) in the bounded noise setting, we have $Z_{k+1} \leq Z_k + {r(\epsilon_f^\prime, 2\epsilon_f^\prime)}$ in all iterations, and $Z_{k+1} \leq Z_k - h(\bar{\alpha}) + {r(\epsilon_f^\prime, 2\epsilon_f^\prime)}$ in good iterations. Putting this together with Lemma \ref{lem:true_is_good} and the other conditions in Assumption \ref{ass:alg_behave}, we obtain the following theorem.
	\begin{restatable}[Iteration complexity in the bounded noise setting]{theorem}{boundednoise}
		\label{thm:bounded_noise}
		Suppose Assumption \ref{ass:alg_behave} holds, and $E_{k}, E^+_{k} \leq \epsilon_f\leq \epsilon_f^\prime$ at every iteration.
               Then for any $\hat{p} \in (\frac12 + \frac{{r(\epsilon_f^\prime, 2\epsilon_f^\prime)}}{h(\bar{\alpha})}, p)$, and 
		$t\geq \frac{R }{\hat{p} - \frac12 - \frac{{r(\epsilon_f^\prime, 2\epsilon_f^\prime)}}{h(\bar{\alpha})}}$ we have
		$$\P\left(T_\varepsilon \leq t\right) \geq 1 - \exp\left(-\frac{(p-\hat{p})^2}{2p^2}t\right),$$
where $R = \frac{Z_0}{h(\bar{\alpha})}+\frac{d}{2}$ and $d = \max\left\{ - \frac{\ln \alpha_0 - \ln \bar{\alpha}}{\ln\gamma}, 0\right\}$.  
	\end{restatable}
		\begin{proof}
		In the bounded noise case, Assumption \ref{ass:alg_behave} tells us that as long as $k < T_\varepsilon$, we have $Z_{k+1} \leq Z_k - h(\bar{\alpha}) + {r(\epsilon_f^\prime, 2\epsilon_f^\prime)}$ if $U_kI_k\Theta_k = 1$,  and $Z_{k+1} \leq Z_k + {r(\epsilon_f^\prime, 2\epsilon_f^\prime)}$ if $U_kI_k\Theta_k = 0$. 
		
		The event $T_\varepsilon > t$ implies that $Z_{t}>0$ (\rev{since $Z_{t}= 0$ can only happen when the stopping time has been reached}, hence $T_\varepsilon \leq t$), this in turn implies  the event $\sum_{k=0}^{t-1}U_kI_k\Theta_k <(\hat p - \frac12)t - \frac{d}{2}$. To see this, assume that  $\sum_{k=0}^{t-1}U_kI_k\Theta_k \geq (\hat p - \frac12)t - \frac{d}{2}$, then 
		\begin{align*}
		Z_t &\leq Z_0 - \left[\left(\left(\hat{p} - \frac12\right)\cdot t - \frac{d}{2}\right) h(\bar{\alpha}) - t \cdot {r(\epsilon_f^\prime, 2\epsilon_f^\prime)}\right] \\
		&= Z_0 -  \left(\left(\hat{p}-\frac12\right)h(\bar\alpha)-{r(\epsilon_f^\prime, 2\epsilon_f^\prime)}\right)t + \frac{d}{2}\cdot h(\bar\alpha) \leq 0.
		\end{align*}
		The last inequality above used the assumptions that
		$\hat{p} \geq \frac12 + \frac{{r(\epsilon_f^\prime, 2\epsilon_f^\prime)}}{h(\bar{\alpha})}$ and 	$t \geq \frac{R}{\hat{p} - \frac12 - \frac{{r(\epsilon_f^\prime, 2\epsilon_f^\prime)}}{h(\bar{\alpha})}}$.
		
		Thus, we get
		\begin{align*}
			\P(T_\varepsilon > t) &= \P\left(T_\varepsilon > t, \; \sum_{k=0}^{t-1}U_k\Theta_kI_k < \left(\hat{p}-\frac12\right)t - \frac{d}{2}\right) \\[5pt]
			&= \P\left(T_\varepsilon > t, \; \sum_{k=0}^{t-1}U_k\Theta_kI_k < \left(\hat{p}-\frac12\right)t - \frac{d}{2},\; \sum_{k=0}^{t-1}I_k < \hat{p}{t}\right) \\[5pt]
			& \qquad + \P\left(T_\varepsilon > t, \; \sum_{k=0}^{t-1}U_k\Theta_kI_k < \left(\hat{p}-\frac12\right)t - \frac{d}{2},\; \sum_{k=0}^{t-1}I_k \geq \hat{p}{t}\right) \\[5pt]
			&\leq \P\left(\sum_{k=0}^{t-1}I_k < \hat{p} t \right) + 0  \leq\exp\left(-\frac{(p-\hat{p})^2}{2p^2}t \right).
		\end{align*}
		Here, the first equality is due to the fact that the event $T_\varepsilon > t$ implies the event $\sum_{k=0}^{t-1}U_k\Theta_kI_k < \left(\hat{p}-\frac12\right)t - \frac{d}{2}$. The first inequality uses  Lemma \ref{lem:true_is_good}, and the last inequality is by Lemma \ref{lem:azumaheoffding}.
	\end{proof}	
		
\subsection{General subexponential noise case}
\label{sec:subexp_noise}		
We now present a high probability bound for the iteration complexity with general  subexponential noise in the zeroth-order oracle. 
The result is very similar to that of Theorem \ref{thm:bounded_noise}. 
The main difference from the bounded noise analysis is that instead of bounding the ``damage" caused on a per-iteration basis, we bound the sum of all such damages over all iterations. 

\rev{

We recall the definition of a subexponential random variable $X$ with parameter $(\nu,b)$ as follows:
\begin{equation*}
\mathbb E\left [\exp\{\lambda (X-\E[X])\}\right ]\leq \exp\left(\frac{\lambda^2\nu^2}{2}\right), \quad \forall \abs{\lambda}\in \left[0,\frac{1}{b}\right].
\end{equation*}

}

\begin{lemma}\label{lemma:concave_imply_subexp}
	For $r(\epsilon_f^\prime,\cdot)$ increasing continuous and concave in the second argument with $r(0,0)=0$, $r(\epsilon_f^\prime,E_k+E_k^+)$ is a subexponential random variable as a function of the random variable $E_k+E_k^+$.
\end{lemma}

\begin{proof}
	Another equivalent definition for $r(\epsilon_f^\prime,E_k+E_k^+)$ to be a subexponential random variable is if for some  $c>0$,
	$$\P\left(|r(\epsilon_f^\prime,E_k+E_k^+)|\geq t\right) \leq 2 e^{-ct} \text{ for all } t \geq 0.$$
	 \rev{(See for example Proposition 2.7.1 in \cite{vershynin2018high}.)} 
	Since $r(\epsilon_f^\prime,E_k+E_k^+)$ is always non-negative, we just need to show $\P\left(r(\epsilon_f^\prime,E_k+E_k^+)\geq t\right) \leq 2 e^{-ct} \text{ for all } t \geq 0.$
	
	$r(\epsilon_f^\prime,\cdot)$ is a single argument function from $\R_+\rightarrow\R_+$ that is increasing so it is invertible, with inverse function denoted $r^{-1}_{\epsilon_f^\prime}(\cdot)$ also increasing.  Let $[I_l,I_u)$ be the range of $r(\epsilon_f^\prime,\cdot)$, then the above inequality is automatically satisfied for all $t\geq I_u$.
	Hence, it remains to show that, for some $c>0$,
	$\P\left(r(\epsilon_f^\prime,E_k+E_k^+)\geq t\right) \leq 2 e^{-ct} \text{ for all } t \in [0,I_l),$ and 
	$\P\left(E_k+E_k^+\geq r_{\epsilon_f^\prime}^{-1}(t)\right) \leq 2 e^{-ct} \text{ for all }  t\in [I_l,I_u).$ 

	The first inequality can be assured by choosing \rev{$c\leq\frac {\ln(2)}{I_l}$} so that $e^{-cI_l}\geq \frac 1 2$. 
	For the second inequality, note that since $E_k+E_k^+$ is a subexponential random variable itself, for some $\hat c>0,$ it satisfies
	$\P\left(E_k+E_k^+\geq r_{\epsilon_f^\prime}^{-1}(t)\right) \leq 2 e^{-\hat cr_{\epsilon_f^\prime}^{-1}(t)} \text{ for all } t \in  [I_l,I_u).$ Moreover, by assumptions of  $r(\epsilon_f^\prime,\cdot)$, we have $r^{-1}_{\epsilon_f^\prime}(\cdot)$ is convex. 
	Hence, by choosing $c>0$ sufficiently small, there exists \rev{$v > I_l$}, such that $2 e^{-\hat cr_{\epsilon_f^\prime}^{-1}(t)}\leq 2 e^{-ct}$ for all $t\geq v$.  By choosing $c'=\min\{c,\frac{\ln(2)}{v}\}$, we ensure $2 e^{-c't}\geq 1$ for all \rev{$t \in [I_\ell, v]$}. Thus, for such $c'>0$, we have that the above inequality holds for all $t\geq 0$.
\end{proof}

Let ${(\nu_r,b_r)}$ be the parameters of the subexponential random variable $r(\epsilon_f^\prime, E_k+E_k^+)$.
The subexponential parameter will be derived for each specific $r(\epsilon_f^\prime,E_k+E_k^+)$ for each function class later. \rev{In the analysis of the iteration complexity, we will use the following Bernstein-like concentration inequality which gives an exponentially decaying bound on the probability that $\sum_k r(\epsilon_f', E_k + E_k^+)$ deviates from its mean.

%

\begin{prop}
\label{prop:bernstein}
For all $t > 0$,
 $$\P\left(\frac1t\sum_{k=0}^{t-1} {r(\epsilon_f^\prime, E_k+E_k^+)}>  {\E\left[r(\epsilon_f^\prime, E_k+E_k^+)\right]} + s\right) \leq e^{-\min\{\frac{s^2t}{{2\nu_r^2}},\frac{st}{{2b_r}}\}}.$$
\end{prop}

\begin{proof}
\rev{We note that by the definition of the stochastic zeroth-order oracle, the random variables $E_k$ (and $E_k^+$) satisfy: 
	$${\mathbb E}\left [\exp\{\lambda (E_k-\E[E_k])\}\mid \sF'_{k-1} \right ]\leq \exp\left(\frac{\lambda^2\nu^2}{2}\right), \quad \forall \lambda\in \left[0,\frac{1}{b}\right],$$ where $\sF'_{k-1}=\sigma (M_0, M_1, \ldots, M_{k-1}, \Xi'_k)$, which is the filtration generated by the randomness of the previous iterations and the randomness of the gradient estimate at the current iteration.}

For clarity of notation, let $\mathcal{E}_k := r(\epsilon_f', E_k + E_k^+) - \E[r(\epsilon_f', E_k + E_k^+)]$ in this proof. By the preceding discussion and the properties of the zeroth-order oracle, $\mathcal{E}_k$ is $(\nu_r, b_r)$-subexponential conditioned on $\mathcal{F}'_{k-1}$. 
For any $\lambda \in (0, \frac{1}{b_r}]$, we have
\begin{align*}
\P\left(\frac{1}{t}\sum_{k=0}^{t-1}\sE_k > s\right)
= \P\left(\exp\left(\lambda\sum_{k=0}^{t-1}\sE_k\right) > e^{\lambda ts}\right) \leq e^{-\lambda ts}\, \E\left[ \exp\left(\lambda\sum_{k=0}^{t-1}\sE_k\right) \right]
\end{align*}
where the second inequality is by Markov's inequality. 

We claim that $\E\left[ \exp\left(\lambda\sum_{k=0}^{t-1}\sE_k\right) \right] \leq \exp\left(t\lambda^2\nu_r^2/2 \right)$. This can be shown by induction, since if $\E\left[ \exp\left(\lambda\sum_{k=0}^{m-1}\sE_k\right) \right] \leq \exp\left(m\lambda^2\nu_r^2/2 \right)$ for some $m$, then
\begin{align*}
&\E\left[ \exp\left(\lambda\sum_{k=0}^{m}\sE_k\right) \right]
= \E\left[ \exp\left(\lambda\sum_{k=0}^{m-1}\sE_k\right) \E\left[\exp(\lambda \sE_m) \mid \sF'_{m-1}\right]\right] \\
&\qquad \leq \exp(\lambda^2\nu_r^2/2) \,\E\left[ \exp\left(\lambda\sum_{k=0}^{m-1}\sE_k\right)\right]
\leq \exp\left((m+1)\lambda^2\nu_r^2/2 \right).
\end{align*}
Thus, for all $\lambda \in [0, \frac{1}{b_r}]$, we have
$$\P\left(\frac{1}{t}\sum_{k=0}^{t-1}\sE_k > s\right) \leq \exp\left(-\lambda t s + \frac{t\lambda^2\nu_r^2}{2} \right).$$
The result follows by setting $\lambda = \min\{\frac{1}{b_r}, \frac{s}{\nu_r^2}\}$, since:
\begin{itemize}
	\item If $\frac{s}{\nu_r^2} \leq \frac{1}{b_r}$, then $-\lambda ts + \frac{t\lambda^2\nu_r^2}{2} = -\frac{s^2t}{2\nu_r^2}$, and
	\item If $\frac{s}{\nu_r^2} > \frac{1}{b_r}$, then $-\lambda ts + \frac{t\lambda^2\nu_r^2}{2} = -\frac{ts}{b_r} + \frac{t\nu_r^2}{2b_r^2} \leq -\frac{ts}{2b_r}$.
\end{itemize}
Thus for this choice of $\lambda$, we have $\P\left(\frac{1}{t}\sum_{k=0}^{t-1}\sE_k > s\right) \leq \exp\left( -\min\{\frac{s^2t}{{2\nu_r^2}},\frac{st}{{2b_r}}\}\right)$ as claimed.
\end{proof}
}

We are now ready to introduce the main theorem for the iteration complexity.

\begin{restatable}[Iteration complexity in the subexponential noise setting]{theorem}{subexpnoise}
\label{thm:subexp_noise}
Suppose Assumptions \ref{epsilonf} and \ref{ass:alg_behave} hold. Then for any $s \geq 0$, $\hat{p} \in ( \frac12 + \frac{{r(\epsilon_f^\prime, 2\epsilon_f^\prime)}+s}{h(\bar{\alpha})}, p)$, and 
$t \geq \frac{R}{\hat{p} - \frac12 - \frac{{r(\epsilon_f^\prime, 2\epsilon_f^\prime)}+s}{h(\bar{\alpha})}}$, we have
$$\P\left(T_\varepsilon \leq t\right) \geq 1 - \exp\left(-\frac{(p-\hat{p})^2}{2p^2}t\right) - e^{-\min\{\frac{s^2t}{{2\nu_r^2}},\frac{st}{{2b_r}}\}},$$
where $R = \frac{Z_0}{h(\bar{\alpha})}+\frac{d}{2}$, $d = \max\left\{ - \frac{\ln \alpha_0 - \ln \bar{\alpha}}{\ln\gamma}, 0\right\}$ and ${\nu_r,b_r}$ are the parameters of the subexponential random variable $r(\epsilon_f^\prime, E_k+E_k^+)$. 
\end{restatable}
\begin{proof}		
	By Assumption \ref{ass:alg_behave}, for all $k < T_\varepsilon$, we have $Z_{k+1} \leq Z_k - h(\bar{\alpha}) + {r(\epsilon_f^\prime, 2\epsilon_f^\prime)}$ if $U_kI_k\Theta_k = 1$,  and $Z_{k+1} \leq Z_k + {r(\epsilon_f^\prime, E_k+E_k^+)}$ if $U_kI_k\Theta_k = 0$. 
	By the definition of the zeroth-order oracle  (\ref{eq:zero_order}) and Assumption \ref{epsilonf}, we know that $\E[E_k]$ and $\E[E_k^+]$ are bounded above by $\epsilon_f^\prime$ for all $k$. 
	By the law of total probability,
	\begin{align*}
		\P\left(T_\varepsilon > t\right) = 
		&\P\left(\underbrace{T_\varepsilon > t,\; \frac1t\sum_{k=0}^{t-1}{r(\epsilon_f^\prime, E_k+E_k^+)} \leq {r(\epsilon_f^\prime, 2\epsilon_f^\prime)} + s}_{A}\right) \\
		&\quad + 
		\P\left(\underbrace{T_\varepsilon > t,\; \frac1t\sum_{k=0}^{t-1}{r(\epsilon_f^\prime, E_k+E_k^+)} > {r(\epsilon_f^\prime, 2\epsilon_f^\prime)} + s}_{B}\right)
	\end{align*}
	
	First we bound $\P(B)$. {For each $k$, since $\E[E_k]$ and $\E[E_k^+]$ are both bounded above by $\epsilon_f^\prime$, we know that $$r(\epsilon_f^\prime, 2\epsilon_f^\prime) \geq r(\epsilon_f^\prime, \E[E_k+E_k^+]) \geq \E[r(\epsilon_f^\prime, E_k+E_k^+)].$$
	Here, the first inequality is because $r$ is non-decreasing in the second component, and the second inequality is by Jensen's inequality since $r$ is concave in its second component. Therefore, using \rev{\Cref{prop:bernstein}} gives for any $s \geq 0$:}
	\[        \P(B) 
	\leq \P\left(\frac1t\sum_{k=0}^{t-1} {r(\epsilon_f^\prime, E_k+E_k^+)}>  {\E\left[r(\epsilon_f^\prime, E_k+E_k^+)\right]} + s\right) \leq e^{-\min\{\frac{s^2t}{{2\nu_r^2}},\frac{st}{{2b_r}}\}}.
	\]
	
	To bound $\P(A)$ we apply the law of total probability again,
	\begin{align*}
		\P(A) &= \P\left(\underbrace{T_\varepsilon > t,\; \frac1t\sum_{k=0}^{t-1}{r(\epsilon_f^\prime, E_k+E_k^+)} \leq {r(\epsilon_f^\prime, 2\epsilon_f^\prime)} + s, \; \sum_{k=0}^{t-1} \Theta_kI_kU_k < \left(\hat{p}-\frac12\right)t - \frac{d}{2}}_{A_1}\right) \\
		& \qquad +\P\left(\underbrace{T_\varepsilon > t,\; \frac1t\sum_{k=0}^{t-1}{r(\epsilon_f^\prime, E_k+E_k^+)} \leq {r(\epsilon_f^\prime, 2\epsilon_f^\prime)} + s, \; \sum_{k=0}^{t-1} \Theta_kI_kU_k \geq \left(\hat{p}-\frac12\right)t - \frac{d}{2}}_{A_2}\right). 
	\end{align*}
	Using the same logic as the first parts of the proof of   Theorem \ref{thm:bounded_noise}, one can show that $P(A_2)=0$ since 
	$T_\varepsilon > t$ and $\frac1t\sum_{k=0}^{t-1}{r(\epsilon_f^\prime, E_k+E_k^+)} \leq {r(\epsilon_f^\prime, 2\epsilon_f^\prime)} + s$ together imply that $\sum_{k=0}^{t-1}\Theta_kI_kU_k < \left(\hat{p}-\frac12\right)t - \frac{d}{2}$. Then, using the same argument as the second part of the proof of  Theorem \ref{thm:bounded_noise}, \revv{which uses \Cref{lem:azumaheoffding,lem:true_is_good}}, we have
	\begin{align*}
		\P(A_1)&\leq \P\left(T_\varepsilon > t,\; \sum_{k=0}^{t-1} \Theta_kI_kU_k < \left(\hat{p}-\frac12\right)t - \frac{d}{2}\right) \\
		&\leq \exp\left(-\frac{(p-\hat{p})^2}{2p^2}t\right).
	\end{align*}
	
	Combining $\P(A)$ and $\P(B)$, we conclude the proof. 			
\end{proof}
		
  \section{Iteration complexity of the step search algorithm} 
  \label{three_setting}
  
  \rev{In this section, we verify that Assumption \ref{ass:alg_behave} holds for the non-convex, strongly convex, and convex functions, and derive the expressions for the functions $h$ and $r$ in each case. Together with \Cref{thm:subexp_noise}, this gives the high probability iteration complexity bound for SASS for each of the three cases. The following fact about subexponential random variables will be used in the proofs in this section.
  \begin{prop}
 \label{prop:subexp}
  Let $X$ and $Y$ be (possibly dependent) subexponential random variables with parameters $(\nu_1, b_1)$ and $(\nu_2, b_2)$, respectively. Then, 
  \begin{enumerate}
  	\item For any scalars $a,b$, $aX + b$ is $(\abs{a}\nu_1, \abs{a}b_1)$-subexponential, and
	\item $X+Y$ is $(\sqrt{2(\nu_1^2 +\nu_2^2)}, \max\{2b_1, 2b_2\})$-subexponential.
  \end{enumerate}
  \end{prop}
  \begin{proof}
  For the first part, we bound the moment-generating function of $aX + b$ as follows:
  \begin{align*}
  	\E[e^{\lambda(aX + b - \E[aX+b])}] 
	= \E[e^{a\lambda(X -\E X)}]
	\leq e^{a^2\lambda^2\nu_1^2/2} \quad \text{for all $\abs{\lambda} \leq \frac{1}{\abs{a}b_1}$},
  \end{align*}
  where the last inequality is using the fact that $X$ is $(\nu_1, b_1)$-subexponential. This proves 1.
      
  For the second part, we bound the moment-generating function of $X + Y$ as follows:
  \begin{align*}
  	\E[e^{\lambda(X + Y - \E X - \E Y)}] 
	&\leq \left(\E[e^{2\lambda(X -\E X)}]  \E[e^{2\lambda(Y -\E Y)}]\right)^{\frac12} \quad \text{(by Cauchy-Schwarz)}\\
	&\leq \left(e^{4\lambda^2\nu_1^2/2} e^{4\lambda^2\nu_2^2/2}\right)^{\frac12} \quad \text{for all $\abs{\lambda} \leq \min\left\{\frac{1}{2b_1},\, \frac{1}{2b_2} \right\}$} \\
	&= e^{\lambda^2(\nu_1^2 + \nu_2^2)}.
  \end{align*}
  Therefore, $X + Y$ is $(\nu, b)$-subexponential for $\nu = \sqrt{2(\nu_1^2 + \nu_2^2)}$ and $b = \max\{2b_1, 2b_2\}$. 
  \end{proof}
  }
  
  \subsection{Non-convex case}
In the previous section, we presented high probability tail bounds on the iteration complexity under Assumption \ref{ass:alg_behave}. We now verify that Assumption \ref{ass:alg_behave} indeed holds for Algorithm \ref{alg:ls} when applied to smooth, possibly non-convex, functions. Together with the results in Section \ref{sec:high_prob}, this allows us to derive an explicit high-probability bound on the iteration complexity for non-convex functions.

As noted earlier, when either $\epsilon_f$ or $\epsilon_g$ are not zero, Algorithm \ref{alg:ls} does not converge to a stationary point, but converges to a neighborhood where $\|\nabla \phi(x)\|\leq \varepsilon$, with  $\varepsilon$ bounded from below in terms of $\epsilon_f^\prime (\geq \epsilon_f)$ and $\epsilon_g$.
The specific relationship is as follows.
	\begin{restatable}[Lower bound on $\varepsilon$]{inequality}{epiasspt}
		\begin{equation*}\label{ass:eps_non_convex}
		\varepsilon > \max\left\{\frac{\epsilon_g}{\eta}, \max \left\{{1+\tau}, \frac{1}{1-\eta}\right\}\cdot\sqrt{\frac{4\epsilon_f^\prime}{\theta(p - \frac12)} \cdot \max\left\{\frac{0.5L + \kappa}{1-\theta}, \frac{L(1-\eta)}{2(1-2\eta-\theta(1-\eta))} \right\}} \right\},
		\end{equation*}
	for some $\eta\in (0,\frac{1-\theta}{2-\theta})$, and $p>\frac{1}{2}$. 
	\end{restatable} 

   Here $\eta$ can be any value in the interval. $p = 1 - \delta$ when in the bounded noise setting, and $p = 1-\delta -\exp\left(-\min\{\frac{u^2}{\nu^2},\frac{u}{b}\}\right)$ otherwise, with $u = \inf_x \{\epsilon_f^\prime - \E[e(x)]\}$.  

\begin{restatable}[Assumption \ref{ass:alg_behave} holds for Algorithm \ref{alg:ls}]{prop}{propthree}
\label{prop:ass_non-convex} 
If Inequality \ref{ass:eps_non_convex} and Assumptions \ref{ass:Lipschitz} and \ref{epsilonf} hold,  then Assumption \ref{ass:alg_behave} holds for Algorithm \ref{alg:ls} with the following $p$, $\bar \alpha$ and $h(\alpha)$:
	    		\begin{enumerate}
	    		    \item $p = 1 - \delta$ when the noise is bounded by $\epsilon_f^\prime$, and $p = 1-\delta -\exp\left(-\min\{\frac{u^2}{2\nu^2},\frac{u}{2b}\}\right)$ otherwise. Here $u = \inf_x \{\epsilon_f^\prime - \E[e(x)]\}$.
	    		    \item $\bar{\alpha} = \min\left\{\frac{1-\theta}{0.5L + \kappa}, \frac{2(1-2\eta-\theta(1-\eta))}{L(1-\eta)} \right\}$.
	    		    \item $h(\alpha) = \min\left\{\frac{\theta\varepsilon^2\alpha}{(1+\tau)^2}, \theta\alpha(1-\eta)^2\varepsilon^2\right\}$.
			    \item $r(\epsilon_f^\prime, E_k+E_k^+) = 2\epsilon_f^\prime + E_k + E_k^+$.
	    		\end{enumerate}
\end{restatable}

\begin{proof}
	We will show that each item in Assumption \ref{ass:alg_behave} holds. Throughout the proof, we use $f(x)$ to mean $f(x,\xi(x))$ for clarity.
	\begin{description}
		\item[(i)] We need to show that $h(\bar \alpha)>\frac{4\epsilon_f^\prime}{p-\frac 1 2}$. 
		Indeed, using 
		\[
		h(\bar \alpha) = \min\left\{\frac{1}{(1+\tau)^2},(1-\eta)^2\right\}\theta\varepsilon^2\bar \alpha,
		\]
		with
		\[
		\bar{\alpha} = \min\left\{\frac{1-\theta}{0.5L + \kappa}, \frac{2(1-2\eta-\theta(1-\eta))}{L(1-\eta)} \right\},
		\]
		and the Inequality \ref{ass:eps_non_convex} on $\varepsilon$, we conclude that (i) holds. 
		
		\item[(ii)] We denote $J_k :=\one\left\{\norm{{G}_k - \nabla \phi({X}_k)} \leq \max\{\epsilon_g, \min\{\tau,\kappa \sA_k\} \norm{{G}_k}\}\right\}$. 
		
		Clearly, we have 
		\begin{align*}
			\P\left(I_k = 0 \mid \sF_{k-1}\right)
			&= \P\left(J_k = 0 \;\;\text{or}\;\; E_{k}+E^+_{k} > 2\epsilon_f^\prime\mid \sF_{k-1}\right) \\
			&\leq \P\left(J_k = 0 \mid \sF_{k-1}\right) + \P\left( E_{k}+E^+_{k}> 2\epsilon_f^\prime\mid \sF_{k-1}\right).
		\end{align*}
		The first term is bounded above by $\delta$, based on the use of the first-order oracle. The second term is always zero in the case when $\epsilon_f^\prime$ is a deterministic bound on the noise. Otherwise, let $\bar E_k = E_k\mid \sF_{k-1}$ and $\bar E_k^+ = E_k^+\mid \sF_{k-1}$, since $\bar E_k$ and $\bar E_k^+$ individually satisfy the one-sided subexponential bound \eqref{eq:zero_order} with parameters $\epsilon_f\leq \epsilon_f^\prime$ and $(\nu, b)$, for any $x$, 
		then by \Cref{prop:subexp} we have that $\bar E_k + \bar E_k^+$ satisfies \eqref{eq:zero_order} with parameters $2\epsilon_f\leq 2\epsilon_f^\prime$ and $(2\nu, 2b)$ (for any $x$). Hence, we can apply (one-sided) Bernstein's inequality, bounding the second term above by $e^{-\min\{\frac{u^2}{2\nu^2},\frac{u}{2b}\}}$. (Recall that $u = \inf_x \{\epsilon_f^\prime - \E[e(x)]\}$.) Thus, we have shown that
		$$\P(I_k = 1 \mid \sF_{k-1}) \geq p$$
		for all $k$, for the $p$ in the statement of this Proposition. 
		
		The fact $p \in (\frac12 + \frac{4\epsilon_f^\prime}{h(\bar{\alpha})}, 1]$ follows from the definitions of $h$ and $\bar{\alpha}$ in the statement of this Proposition, together with the Inequality \ref{ass:eps_non_convex} on $\varepsilon$. 
		
		\item[(iii)] Since iteration $k$ is true, we know that $\norm{G_k - \nabla \phi(X_k)} \leq \max\{\epsilon_g, \min\{\tau,\kappa \sA_k\}\norm{G_k}\}$. 
		We consider two cases:
		\begin{itemize}
			\item Suppose $\norm{{G}_k - \nabla \phi({X}_k)} \leq \min\{\tau,\kappa {A}_k\}\norm{{G}_k}$. By the triangle inequality, we get 
			$$\norm{{G}_k} \geq \frac{1}{1+\min\{\tau,\kappa \sA_k\}} \norm{\nabla\phi({X}_k)} \geq \frac{1}{1+\tau}\norm{\nabla \phi({X}_k)}.$$  Together with the fact that iteration $k$ is successful, we obtain $$f({X}_{k+1}) - f({X}_k) \leq - {A}_k\theta\norm{{G}_k}^2 + 2\epsilon_f^\prime \leq  - \frac{{A}_k\theta\norm{\nabla\phi({X}_k)}^2}{(1+\tau)^2} + 2\epsilon_f^\prime.$$
			\item Suppose $\norm{{G}_k - \nabla \phi({X}_k)} \leq \epsilon_g$. Since $k < T_\varepsilon$, we have $\norm{\nabla \phi({X}_k)} > \varepsilon\geq \frac{\epsilon_g}{\eta}$. This implies that $\norm{{G}_k - \nabla \phi({X}_k)} \leq \eta\norm{\nabla\phi({X}_k)}$. Rearranging this using the triangle inequality, we get that 
			$$\norm{{G}_k} \geq (1-\eta)\norm{\nabla\phi({X}_k)}.$$
			Putting this together with the fact that iteration $k$ is successful, we obtain
			$$f({X}_{k+1}) - f({X}_k) \leq - {A}_k\theta\norm{{G}_k}^2 + 2\epsilon_f^\prime \leq  - {A}_k\theta(1-\eta)^2\norm{\nabla\phi({X}_k)}^2 + 2\epsilon_f^\prime.$$
		\end{itemize}
		Combining the above two cases, we get that on any true, successful iteration with $k < T_\varepsilon$, the following inequality holds:
		$$f({X}_{k+1}) - f({X}_k) \leq -\min\left\{\frac{1}{(1+\tau)^2}, \, (1-\eta)^2\right\}{A}_k\theta\norm{\nabla\phi({X}_k)}^2 + 2\epsilon_f^\prime.$$
		By $k < T_\varepsilon$, we know $\norm{\nabla\phi({X}_k)} > \varepsilon$, so the above inequality implies $f({X}_{k+1}) - f({X}_k) \leq -h({A}_k) + 2\epsilon_f^\prime$. Finally,  because $E_{k}+E_{k}^+\leq 2\epsilon_f^\prime$ on true iterations, we get $\phi({X}_{k+1}) - \phi({X}_k) \leq -h({A}_k) + 4\epsilon_f^\prime$. Recall that $Z_k = \phi({X}_k) - \phi^*$, so $Z_{k+1} - Z_k = \phi({X}_{k+1}) - \phi({X}_k)$. This proves (iii). 
		
		\item[(iv)] We first show that if ${A}_k \leq \bar{\alpha}$ and $I_k = 1$, then 
		\begin{equation}
			\label{eq:ass1iinoiseless}
			\phi({X}_k - {A}_k{G}_k) \leq \phi({X}_k) - {A}_k\theta\norm{{G}_k}^2.
		\end{equation} 
		Since $I_k=1$,  $\norm{{G}_k - \nabla \phi({X}_k)} \leq \max\{\min\{\tau,\kappa {A}_k\}\norm{{G}_k}, \epsilon_g\}$. Just like in the proof of (iii), we consider two cases:
		\begin{itemize}
			\item Suppose $\norm{{G}_k - \nabla \phi({X}_k)} \leq \min\{\tau,\kappa {A}_k\}\norm{{G}_k}$. Then since ${A}_k \leq \bar{\alpha} \leq \frac{1-\theta}{0.5L + \kappa}$, by Assumption \ref{ass:Lipschitz} and Lemma 3.1 of \cite{CS17}, we have that \eqref{eq:ass1iinoiseless} holds. 
			\item Suppose $\norm{{G}_k - \nabla \phi({X}_k)} \leq \epsilon_g$. Since $k < T_\varepsilon$, we have $\norm{\nabla{\phi({X}_k)}} > \varepsilon \geq \frac{\epsilon_g}{\eta}$ by Inequality \ref{ass:eps_non_convex}. Therefore, $\norm{{G}_k - \nabla \phi({X}_k)} \leq \eta\norm{\nabla\phi({X}_k)}$. Combining this with the fact that ${A}_k \leq \bar{\alpha} \leq \frac{2(1-2\eta-\theta(1-\eta))}{L(1-\eta)}$, by Assumption \ref{ass:Lipschitz} and Lemma 4.3 of \cite{berahas2019global} (applied with $\epsilon_f^\prime = 0$), we have that \eqref{eq:ass1iinoiseless} holds. 
		\end{itemize}
		Now, recalling the definitions of $E_k$ and $E_k^+$ and using the fact that $E_{k} + E_{k}^+ \leq 2\epsilon_f^\prime $ (since $I_k = 1$), inequality (\ref{eq:ass1iinoiseless}) implies
		$$
		f({X}_k - {A}_k{G}_k) \leq f({X}_k) - {A}_k\theta\norm{{G}_k}^2 +E_{k}+E_{k}^+ \leq f({X}_k) - {A}_k\theta\norm{{G}_k}^2 +2\epsilon_f^\prime ,
		$$ 
		which proves (iv).

		\item[(v)] Note  that $Z_{k+1} = Z_k$ on any unsuccessful iteration, so the inequality holds trivially in that case. On the other hand, if iteration $k$ is successful, then by the modified Armijo condition, we have $$f({X}_{k+1}) - f({X}_k) \leq - {A}_k\theta\norm{{G}_k}^2 + 2\epsilon_f^\prime  \leq  2\epsilon_f^\prime .$$
		This implies that $\phi({X}_{k+1}) - \phi({X}_k) \leq 2\epsilon_f^\prime  + E_{k}+E_{k}^+$. Since $Z_{k+1} - Z_k = \phi({X}_{k+1}) - \phi({X}_k)$, (v) is proved. 
	\end{description}
\end{proof}

 In this setting, $r(\epsilon_f^\prime , E_k+E_k^+) = 2\epsilon_f^\prime  + E_k + E_k^+$. Therefore, \rev{by \Cref{prop:subexp}}, the subexponential parameters for $r(\epsilon_f^\prime , E_k+E_k^+)$ are $(\nu_r, b_r) = (2\nu, 2b)$.

Together with Theorem \ref{thm:subexp_noise}, we obtain the explicit complexity bound for Algorithm \ref{alg:ls}.
\begin{restatable}{theorem}{mainfull} 
	\label{thm:main_full}
	Suppose the Inequality \ref{ass:eps_non_convex} on $\varepsilon$ is satisfied for some $\eta\in (0,\frac{1-\theta}{2-\theta})$, and Assumption \ref{ass:Lipschitz} hold, then we have the following bound on the iteration complexity:
 For any $s \geq 0$, $\hat{p} \in ( \frac12 + \frac{4\epsilon_f^\prime +s}{C\varepsilon^2}, p)$, and 
$t \geq \frac{R}{\hat{p} - \frac12 - \frac{4\epsilon_f^\prime +s}{C\varepsilon^2}}$,
$$\P\left(T_\varepsilon \leq t\right) \geq 1 - \exp\left(-\frac{(p-\hat{p})^2}{2p^2}t\right) - \exp\left(-\min\left\{\frac{s^2t}{8\nu^2},\frac{st}{4b}\right\}\right).$$
Here, $R = \frac{\phi(x_0)-\phi^*}{C\varepsilon^2} + \max\left\{-\frac{\ln \alpha_0 - \ln \bar \alpha}{2\ln \gamma}, 0\right\}$, $C=\min\left\{\frac{1}{(1+\tau)^2},(1-\eta)^2\right\}\bar \alpha \theta,$
with $p$ and $\bar \alpha$ as defined in Proposition \ref{prop:ass_non-convex}. 
\end{restatable}		
\begin{remark}
	\begin{enumerate}
\item
	Inequality \ref{ass:eps_non_convex} makes sure there exists some $\hat{p} \in ( \frac12 + \frac{4\epsilon_f^\prime +s}{C\varepsilon^2}, p)$ for some $s>0$.  
	The above theorem is for the general subexponential noise setting. In the bounded noise special case, the last term $\exp\left(-\min\left\{\frac{s^2t}{8\nu^2},\frac{st}{4b}\right\}\right)$ in the probability is not present.

%
%
\item 
	This theorem essentially shows that the iteration complexity of Algorithm \ref{alg:ls} is bounded by   a quantity on the order of
	\[
	\frac{1}{p - \frac12 - \frac{4\epsilon_f^\prime +s}{C\varepsilon^2}}\left (\frac{\phi(x_0)-\phi^*}{C\varepsilon^2} \right )
	\]
	with overwhelmingly high probability. If $p=1$ and $\epsilon_f^\prime =0$, the above quantity essentially recovers the iteration complexity of the deterministic algorithm. 
	\item \revv{The dependence of the iteration complexity on the Lipschitz constant $L$ is $O(L)$. This is the same as the dependence on $L$ in \cite{berahas2019global,CS17}, except the bounds in those papers are in expectation and ours is in high probability. In contrast, the iteration complexity on the Lipschitz constant $L$ in \cite{paquette2018stochastic} is $O(L^3)$, which has a worse dependence on $L$.}
		\end{enumerate}
\end{remark}	

\subsection{Strongly convex case}

We now apply the results to functions $\phi(x)$ that are strongly convex. 
We will verify that Assumption \ref{ass:alg_behave} holds in the strongly convex setting for function evaluations in all cases, namely: noiseless, with bounded noises or with i.i.d. subexponential noises. All results in this section hold for functions that satisfy	the Polyak-Lojasiewicz  inequality (or PL inequality) \cite{polyak1963gradient} as well, since we are only using PL inequality in the analysis.

\begin{assumption}\label{strongly_convex}
	$\phi$ is $\beta$-strongly convex, in other words
	$$\phi(x) \geq \phi(y) + \nabla\phi(y)^T(x-y) + \frac{\beta}{2}\norm{x-y}^2, \quad \text{for all $x, y \in \R^n$}.$$
\end{assumption}

Similar to the non-convex case, the presence of biased noise means that we can only hope to converge to a point $x$ in some neighbourhood of the optimal solution, where the radius of the neighbourhood is determined by the magnitude of the noise. Below, we quantify the relationship between $\varepsilon$, $\epsilon_g$ and $\epsilon_f^\prime$.


%
	\begin{restatable}[Lower bound on $\varepsilon$ for strongly convex functions]{inequality}{epiassptstrong}
	\begin{equation*}\label{ass:eps_strongly_convex}
		\varepsilon > \max\left\{\frac{\epsilon_g^2}{2\beta\eta^2}, 
		\frac{4\epsilon_f^\prime}{\left( 1-\min\left\{\frac{1}{(1+\tau)^2},(1-\eta)^2\right\} \theta \beta \cdot \min\left\{\frac{1-\theta}{0.5L + \kappa}, \frac{2(1-2\eta-\theta(1-\eta))}{L(1-\eta)} \right\} \right)^{\frac 1 2-p}-1},4\epsilon_f^\prime \right\},
	\end{equation*}
	for some $\eta\in (0,\frac{1-\theta}{2-\theta})$, and $p>\frac{1}{2}$. 
\end{restatable} 

We show Assumption \ref{ass:alg_behave} holds in the setting where $\phi(x)$ is strongly convex.
\begin{prop}[Assumption \ref{ass:alg_behave} holds for strongly-convex functions]
	\label{prop:ass_strongly_convex}
	If Inequality \ref{ass:eps_strongly_convex} and Assumptions \ref{ass:Lipschitz} and \ref{epsilonf} hold,  then Assumption \ref{ass:alg_behave} holds for Algorithm \ref{alg:ls} with the following $p$, $\bar \alpha$ and $h(\alpha)$:
	\begin{enumerate}
		\item Let $p = 1 - \delta$ (for noiseless and bounded noise), or $p = 1-\delta -\exp\left(-\min\{\frac{u^2}{2\nu^2},\frac{u}{2b}\}\right)$ otherwise. Here $u = \inf_x \{\epsilon_f^\prime - \E[e(x)]\}$. 
		\item $\bar{\alpha} = \min\left\{\frac{1-\theta}{0.5L + \kappa}, \frac{2(1-2\eta-\theta(1-\eta))}{L(1-\eta)} \right\}$,
		\item $h(\alpha) = \min\left\{ -\ln\left(1-\frac{\alpha\theta\beta}{(1+\tau)^2}\right), -\ln\left(1-\alpha\beta\theta(1-\eta)^2 \right)\right\}$,
		\item $r(\epsilon_f^\prime, E_{k}+E_{k}^+) = \ln\left(1 + \frac{2\epsilon_f^\prime + E_{k}+E_{k}^+}{\varepsilon} \right)$. 
	\end{enumerate}
\end{prop}
\begin{proof}
Since $\phi$ is $\beta$-strongly convex, we know that it satisfy the PL inequality:
\begin{equation}
\label{eq:strongly_convex}
    \norm{\nabla\phi(x_k)}^2 \geq 2\beta\left(\phi(x_k) - \phi^*\right).
\end{equation}
\rev{(For example, see Theorem 2.1.10 of \cite{nesterov_2004}.)
Note this is weaker than strongly convexity, and having the PL inequality is sufficient for proving this proposition. Hence, all the analysis in this section automatically applies to functions that satisfy PL inequality.}
We will use this inequality in the proofs of (iii), (iv), and (v). 
	\begin{description}
		\item[(i)]	The proof of (i) relies on the lower bound for $\varepsilon$ in \Cref{ass:eps_strongly_convex}.  In more detail, we assumed that
	$$
	\varepsilon > \frac{4\epsilon_f^\prime}{\left( 1-\min\left\{\frac{1}{(1+\tau)^2},(1-\eta)^2\right\} \theta \beta \cdot \min\left\{\frac{1-\theta}{0.5L + \kappa}, \frac{2(1-2\eta-\theta(1-\eta))}{L(1-\eta)} \right\} \right)^{\frac 1 2-p}-1},
	$$
	which after plugging in the definition of $\bar{\alpha}$ becomes
	$$
	\varepsilon > \frac{4\epsilon_f^\prime}{\left( 1-\min\left\{\frac{1}{(1+\tau)^2},(1-\eta)^2\right\} \bar{\alpha}\theta \beta  \right)^{\frac 1 2-p}-1}.
	$$
	Rearranging the above inequality yields
	$$
	1 + \frac{4\epsilon_f^\prime}{\varepsilon}
	<
\left( 1-\min\left\{\frac{1}{(1+\tau)^2},(1-\eta)^2\right\} \bar{\alpha}\theta \beta  \right)^{\frac 1 2-p},
	$$
and taking logs on both sides gives
$r(\epsilon_f^\prime, 2\epsilon_f^\prime)< (p-\frac12)h(\bar{\alpha}).$
		
	\item[(ii)] The proof of (ii) is exactly the same as the corresponding proof for the non-convex setting. 
	
	\item[(iii)] We follow the same idea as the proof of (iii) in Proposition \ref{prop:ass_non-convex}, except using the new lower bound for $\varepsilon$.
	Since iteration $k$ is true, we know $\norm{{G}_k - \nabla \phi({X}_k)} \leq \max\{\epsilon_g, \min\{\tau,\kappa {A}_k\}\norm{{G}_k}\}$. There are two cases:
		\begin{itemize}
			\item Suppose $\norm{{G}_k - \nabla \phi({X}_k)} \leq \min\{\tau,\kappa {A}_k\}\norm{{G}_k}$. Then exactly as in the proof of (iii) in Proposition \ref{prop:ass_non-convex}, we get $$f({X}_{k+1}) - f({X}_k) \leq - {A}_k\theta\norm{{G}_k}^2 + 2\epsilon_f^\prime \leq  - \frac{{A}_k\theta\norm{\nabla\phi({X}_k)}^2}{(1+\tau)^2} + 2\epsilon_f^\prime.$$
			\item Suppose $\norm{{G}_k - \nabla \phi({X}_k)} \leq \epsilon_g$. Since $k < T_\varepsilon$, we have $\phi({X}_k) - \phi^\star> \varepsilon\geq \frac{\epsilon_g^2}{2\beta\eta^2}$. Combining this with the strong convexity condition (\ref{eq:strongly_convex}), we get that $\norm{\nabla\phi({X}_k)}^2 \geq \frac{\epsilon_g^2}{\eta^2}$ This implies that $\norm{{G}_k - \nabla \phi({X}_k)} \leq \eta\norm{\nabla\phi({X}_k)}$. Rearranging this using the triangle inequality, we get that 
			$$\norm{{G}_k} \geq (1-\eta)\norm{\nabla\phi({X}_k)}.$$
			Putting this together with the fact that iteration $k$ is successful, we obtain
			$$f({X}_{k+1}) - f({X}_k) \leq - {A}_k\theta\norm{{G}_k}^2 + 2\epsilon_f^\prime \leq  - {A}_k\theta(1-\eta)^2\norm{\nabla\phi({X}_k)}^2 + 2\epsilon_f^\prime.$$
		\end{itemize}
		Combining the above two cases, we get that on any true, successful iteration with $k < T_\varepsilon$, the following inequality holds:
	$$\phi({X}_{k+1}) - \phi({X}_k) \leq -\min\left\{\frac{1}{(1+\tau)^2}, \, (1-\eta)^2\right\}{A}_k\theta\norm{\nabla\phi({X}_k)}^2+2\epsilon_f^\prime+E_{k}+E_{k}^+.$$
	Since $\phi$ is $\beta$-strongly convex, we know that $\norm{\nabla\phi({X}_k)}^2 \geq 2\beta\left(\phi({X}_k) - \phi^*\right)$. Moreover, since $k < T_\varepsilon$, we know that $\phi({X}_k) - \phi^* > \varepsilon$. Plugging these into the above inequality, and letting $m := \min\left\{\frac{1}{(1+\tau)^2}, \, (1-\eta)^2\right\}{A}_k\theta\beta$ for clarity, we get
	$$\phi({X}_{k+1}) - \phi({X}_k) \leq - 2m\cdot(\phi({X}_k) - \phi^*)+\frac{2\epsilon_f^\prime+E_{k}+E_{k}^+}{\varepsilon}\cdot(\phi({X}_k) - \phi^*).$$
	Rearranging the above inequality, we get
	$$\frac{\phi({X}_{k+1})-\phi^*}{\phi({X}_k)-\phi^*}\leq \left(1 - 2m + \frac{2\epsilon_f^\prime+E_{k}+E_{k}^+}{\varepsilon}\right).$$
	Taking logs of both sides, and recalling our definition of $Z_k = \ln\left(\frac{\phi(X_k) - \phi^*}{\varepsilon}\right)$, we get
	$$Z_{k+1}-Z_k \leq \ln \left(1 - 2m + \frac{2\epsilon_f^\prime+E_{k}+E_{k}^+}{\varepsilon}\right).$$
	Since iteration $k$ is true, we know that $E_{k}+E_{k}^+ \leq 2\epsilon_f^\prime$, so that $\frac{2\epsilon_f^\prime + E_{k}+E_{k}^+}{\varepsilon} \leq \frac{4\epsilon_f^\prime}{\varepsilon} \leq 1$ by Inequality \ref{ass:eps_strongly_convex}. Therefore, we can rearrange the above inequality as follows:
	\begin{align*}
		Z_{k+1} - Z_k
		&\leq \ln \left(1 - m - m\cdot\frac{2\epsilon_f^\prime+E_{k}+E_{k}^+}{\varepsilon}+ \frac{2\epsilon_f^\prime+E_{k}+E_{k}^+}{\varepsilon}\right) \\[5pt]
		&= \ln \left[\left(1 - m\right)\left(1 + \frac{2\epsilon_f^\prime+E_{k}+E_{k}^+}{\varepsilon} \right)\right] \\[5pt]
		&= \ln \left(1 - m\right) + \ln\left(1 + \frac{2\epsilon_f^\prime+E_{k}+E_{k}^+}{\varepsilon} \right) \\[5pt]
		&\leq  -h({A}_k) + r(\epsilon_f^\prime, 2\epsilon_f^\prime).
	\end{align*}
	This proves (iii).
	
	\item[(iv)] This is proved similarly as in the non-convex setting, now using the new stopping criteria for the strongly convex setting and the new lower bound for $\varepsilon$. 
	
	We first show that if ${A}_k \leq \bar{\alpha}$ and $I_k = 1$, then 
		\begin{equation}
		\label{eq:ass1iv_stronglyconvex}
			\phi({X}_k - {A}_k{G}_k) \leq \phi({X}_k) - {A}_k\theta\norm{{G}_k}^2.
		\end{equation} 
		Since $I_k=1$,  $\norm{{G}_k - \nabla \phi({X}_k)} \leq \max\{\min\{\tau,\kappa {A}_k\}\norm{{G}_k}, \epsilon_g\}$. Just like in the proof of (iii), we consider two cases:
		\begin{itemize}
			\item Suppose $\norm{{G}_k - \nabla \phi({X}_k)} \leq \min\{\tau,\kappa {A}_k\}\norm{{G}_k}$. Then since ${A}_k \leq \bar{\alpha} \leq \frac{1-\theta}{0.5L + \kappa}$, by Assumption \ref{ass:Lipschitz} and Lemma 3.1 of \cite{CS17}, we have \eqref{eq:ass1iv_stronglyconvex} holds. 
			\item Suppose $\norm{{G}_k - \nabla \phi({X}_k)} \leq \epsilon_g$. Since $k < T_\varepsilon$, we have $\phi({X}_k) - \phi^\star> \varepsilon\geq \frac{\epsilon_g^2}{2\beta\eta^2}$. Combining this with the strong convexity condition (\ref{eq:strongly_convex}), we get that $\norm{\nabla\phi({X}_k)}^2 \geq \frac{\epsilon_g^2}{\eta^2}$ This implies that $\norm{{G}_k - \nabla \phi({X}_k)} \leq \eta\norm{\nabla\phi({X}_k)}$. Combining this with the fact that ${A}_k \leq \bar{\alpha} \leq \frac{2(1-2\eta-\theta(1-\eta))}{L(1-\eta)}$, by Assumption \ref{ass:Lipschitz} and Lemma 4.3 of \cite{berahas2019global} (applied with $\epsilon_f^\prime = 0$), we have that \eqref{eq:ass1iv_stronglyconvex} holds. 
		\end{itemize}
		Now, recalling the definitions of $E_k$ and $E_k^+$ and using the fact that $E_{k} + E_{k}^+ \leq 2\epsilon_f^\prime$ (since $I_k = 1$), inequality (\ref{eq:ass1iinoiseless}) implies
		$$
		f({X}_k - {A}_k{G}_k) \leq f({X}_k) - {A}_k\theta\norm{{G}_k}^2 +E_{k}+E_{k}^+ \leq f({X}_k) - {A}_k\theta\norm{{G}_k}^2 +2\epsilon_f^\prime,
		$$ 
		which proves (iv). 
	
	\item[(v)] Finally, we turn to proving (v). The proof of (v) follows the same steps as the proof of (iii), but is simpler. If $k$ is an unsuccessful step, then $Z_{k+1} = Z_k$, so the inequality in (v) is clearly satisfied. If $k$ is successful, then
	$$\phi({X}_{k+1}) - \phi({X}_k) \leq 2\epsilon_f^\prime + E_{k} + E_{k}^+ \leq \frac{2\epsilon_f^\prime + E_{k}+E_{k}^+}{\varepsilon}\cdot\left(\phi({X}_k) - \phi^*\right).$$
	Here, the first inequality is by the sufficient decrease condition, and the second inequality is because $k < T_\varepsilon$. Rearranging, we get
	$$
	\frac{\phi({X}_{k+1}) - \phi^*}{\phi({X}_k) - \phi^*} \leq \left(1 + \frac{2\epsilon_f^\prime + E_{k} + E_{k}^+}{\varepsilon}\right).
	$$
	Taking logs on both sides gives $Z_{k+1} - Z_k \leq \ln\left(1 + \frac{2\epsilon_f^\prime + E_{k} + E_{k}^+}{\varepsilon}\right)$, as desired. 
	\end{description}	
\end{proof}
	
The proposition below gives the explicit subexponential parameters of $r(\epsilon_f^\prime, E_{k}+E_{k}^+)$. Recall that $r(\epsilon_f^\prime, E_{k}+E_{k}^+) = \ln\left(1 + \frac{2\epsilon_f^\prime + E_{k}+E_{k}^+}{\varepsilon} \right)$ in this strongly convex setting, and \rev{by \Cref{prop:subexp}, $1 + \frac{2\epsilon_f^\prime + E_{k}+E_{k}^+}{\varepsilon}$, is $(\frac{2\nu}{\eps}, \frac{2b}{\eps})$-subexponential.} 
Together with the following proposition, the parameters of $r$ for the strongly convex setting are
	$${\nu_r = b_r = 4e^2\max\left\{\frac{2\nu}{\varepsilon}, \frac{2b}{\varepsilon}\right\} + 4e\left(1+\frac{4\epsilon_f^\prime}{\varepsilon}\right)}.$$

		\begin{prop}
			\label{prop:log_subexp}
			Let $W \geq 1$ be a $(\nu, b)$-subexponential random variable. Then $\ln(W)$ is $(\nu', b')$-subexponential with $\nu' = b' = 4e^2\max\{\nu, b\} + 4e \E W$. 
		\end{prop}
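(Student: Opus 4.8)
The plan is to mirror the strategy used in the proof of Proposition~\ref{prop:zeroth_order}: pass from the $(\nu,b)$-subexponential description to the sub-exponential norm $\|\cdot\|_{\psi_1}$ via Proposition 2.7.1 of \cite{vershynin2018high}, carry out all the estimates at the level of the $\psi_1$ norm, and then convert back. Concretely, I would first bound $\|\ln W - \E\ln W\|_{\psi_1}$ and then invoke the implication (b)$\to$(e) of Proposition 2.7.1 to turn that bound into sub-exponential parameters, exactly as was done at the end of Proposition~\ref{prop:zeroth_order}.

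For the $\psi_1$ bound I would start from the triangle inequality
$$\|\ln W - \E\ln W\|_{\psi_1} \le \|\ln W\|_{\psi_1} + \|\E\ln W\|_{\psi_1} = \|\ln W\|_{\psi_1} + \E\ln W,$$
where the last equality uses that a constant $a$ satisfies $\|a\|_{\psi_1} = |a|$ and that $\ln W \ge 0$ since $W \ge 1$. Both resulting terms are then controlled by the single elementary inequality $0 \le \ln w \le w$, valid for $w \ge 1$. This immediately gives $\E\ln W \le \E W$, and, more importantly, the pointwise domination $|\ln W| = \ln W \le W = |W|$ transfers to every moment, $\E|\ln W|^k \le \E|W|^k$ for all $k \ge 1$, and hence to the supremum defining the norm, yielding $\|\ln W\|_{\psi_1} \le \|W\|_{\psi_1}$.

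It then remains to bound $\|W\|_{\psi_1}$. Since $W - \E W$ is mean-zero and $(\nu,b)$-subexponential, the implication (e)$\to$(b) of Proposition 2.7.1 gives $\|W - \E W\|_{\psi_1} \le 2e\max\{\nu,b\}$ (the same constant as in Proposition~\ref{prop:zeroth_order}), and one more application of the triangle inequality together with $\|\E W\|_{\psi_1} = \E W$ yields $\|W\|_{\psi_1} \le 2e\max\{\nu,b\} + \E W$. Assembling the pieces,
$$\|\ln W - \E\ln W\|_{\psi_1} \le \bigl(2e\max\{\nu,b\} + \E W\bigr) + \E W = 2e\max\{\nu,b\} + 2\E W =: K.$$
Applying the implication (b)$\to$(e) of Proposition 2.7.1 — which, exactly as in Proposition~\ref{prop:zeroth_order}, converts a $\psi_1$-bound $K$ on a centered variable into sub-exponential parameters $(2eK, 2eK)$ — shows that $\ln W$ is $(\nu',b')$-subexponential with $\nu' = b' = 2eK = 4e^2\max\{\nu,b\} + 4e\,\E W$, as claimed.

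I expect the main obstacle to be purely the bookkeeping of constants. The two conversions between the $(\nu,b)$ parameters and $\|\cdot\|_{\psi_1}$ each contribute a factor $2e$, so one must keep $\E\ln W$ bounded \emph{directly} by $\E W$ — rather than folding it into the factor-two that comes from centering — in order to land precisely on the stated coefficients $4e^2$ and $4e$ instead of something looser. The genuinely analytic content, namely $\|\ln W\|_{\psi_1}\le\|W\|_{\psi_1}$ via $\ln w \le w$, is elementary; everything else is a careful application of the cited equivalences of sub-exponentiality, and the hypothesis $W\ge 1$ is exactly what makes $\ln W$ nonnegative so that these moment dominations go through cleanly.
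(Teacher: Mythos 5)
Your proposal is correct and follows essentially the same route as the paper's proof: the triangle inequality for $\norm{\,\cdot\,}_{\psi_1}$, the pointwise domination $0 \leq \ln W \leq W$ to get $\norm{\ln W}_{\psi_1} \leq \norm{W}_{\psi_1}$ and $\E \ln W \leq \E W$, and the two conversions via Proposition 2.7.1 of \cite{vershynin2018high} with the same constants. The only cosmetic difference is that you apply $\E\ln W \leq \E W$ before assembling, whereas the paper carries $\E\ln W$ to the end (obtaining the slightly tighter $4e^2\max\{\nu,b\} + 2e(\E W + \E\ln W)$, which is then relaxed to the stated $4e\,\E W$).
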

		\begin{proof}
			For a random variable $W$, let $\norm{W}_{\psi_1} := \sup_{k\geq 1} \frac1k \left(\E\abs{W}^k\right)^{\frac1k}$. Using the fact that $\norm{\,\cdot\,}_\psi$ is a norm, we have
			\begin{align*}
				\norm{\ln(W) - \E\ln(W)}_{\psi_1}
				&\leq \norm{\ln(W)}_{\psi_1} + \norm{\E\ln(W)}_{\psi_1} \quad \text{(triangle inequality)}\\
				&= \norm{\ln(W)}_{\psi_1} + \E\ln(W) \quad \text{(definition of $\norm{\,\cdot\,}_\psi$)}\\
				&\leq \norm{W}_{\psi_1} + \E\ln(W) \quad \text{($0 \leq \ln(W) \leq W$, since $W \geq 1$)}\\
				&\leq \norm{W - \E W}_{\psi_1} + \E W + \E\ln(W) \quad \text{(triangle inequality).}
			\end{align*}
			By Proposition 2.7.1 (e) $\rightarrow$ (b) of \cite{vershynin2018high}, we have $\norm{W- \E W}_{\psi_1} \leq 2e\max\{\nu, b\}$. Thus, 
			$$\norm{\ln(W) - \E\ln(W)}_{\psi_1} \leq 2e \max\{\nu, b\} + \E W + \E\ln(W).$$
			Applying Proposition 2.7.1 (b) $\rightarrow$ (e) of \cite{vershynin2018high}, we conclude that $\ln(W)$ is $(\nu', b')$-subexponential where $\nu' = b' = 4e^2\max\{\nu, b\} + 2e \left(\E W + \E \ln(W)\right)$. 
		\end{proof}

	Putting things together, we obtain the following theorem, which bounds the iteration complexity in the strongly convex case.
	
\begin{theorem}
	{
		Suppose Inequality \ref{ass:eps_strongly_convex} on $\varepsilon$ holds for some $\eta\in (0,\frac{1-\theta }{2-\theta})$, and Assumptions \ref{ass:Lipschitz} and \ref{strongly_convex} hold. Then we have:
		For any $s \geq 0$, $\hat{p} \in ( \frac12 + \frac{\ln\left(1 + \frac{4\epsilon_f^\prime}{\varepsilon} \right)+s}{C}, p)$, and 
		$t \geq \frac{R}{\hat{p} - \frac12 - \frac{\ln\left(1 + \frac{4\epsilon_f^\prime}{\varepsilon} \right)+s}{C}}$, with 
		$C = -\max\left\{\ln\left(1-\frac{\bar\alpha\theta\beta}{(1+\tau)^2}\right),\ln\left(1-\bar\alpha\beta\theta(1-\eta)^2 \right)\right\}$,
		\begin{align*}
		&\P\left(T_\varepsilon \leq t\right) \geq 1 - \exp\left(-\frac{(p-\hat{p})^2}{2p^2}t\right) \\
		&- \exp\left(-\min\left\{\frac{s^2t}{2\left(4e^2\max\left\{\frac{2\nu}{\varepsilon}, \frac{2b}{\varepsilon}\right\} + 4e\left(1+\frac{4\epsilon_f^\prime}{\varepsilon}\right)\right)^2},\frac{st}{2\left(4e^2\max\left\{\frac{2\nu}{\varepsilon}, \frac{2b}{\varepsilon}\right\} + 4e\left(1+\frac{4\epsilon_f^\prime}{\varepsilon}\right)\right)}\right\}\right).
		\end{align*}
		Here, 
		$R = \frac{1}{C}{\ln\left(\frac{\phi(x_0) - \phi^*}{\varepsilon}\right)} + \max\left\{-\frac{\ln \alpha_0 - \ln \bar \alpha}{2\ln \gamma}, 0\right\}$, 
		with $p$ and $\bar \alpha$ as defined in Proposition \ref{prop:ass_strongly_convex}.
	}
	
\end{theorem}

	\subsection{Convex case}
	We now apply the results to functions $\phi(x)$ that are  convex. 
	We will verify that Assumption \ref{ass:alg_behave} also holds in the convex setting.
	
	\begin{assumption}\label{ass:convex}
		$\phi$ is convex, and there exists a constant $D > 0$ such that
		$$\left\|x-x^{\star}\right\| \leq D \quad \text { for all } x \in \mathcal{U},$$
		where $x^{\star}$ is some global minimizer of $\phi$, and the set $\mathcal{U}$ contains all
		iteration realizations.
	\end{assumption}
	
	For the convex case, we define
	the stopping time $T_{\varepsilon}$ to be the first time either $\phi({X}_k)-\phi^*\leq \varepsilon_0$ or $\norm{\nabla\phi({X}_k)}\leq \varepsilon_1$. 
	
	The presence of biased noise means that one can only hope to converge to a point $x$ in some neighborhood of the optimal solution, where the radius of the neighborhood is determined by the magnitude of the noise. 	
	\rev{
}

Below, we quantify the neighborhood of convergence.
	
	\begin{restatable}[Lower bound on $\varepsilon_0$ and $\varepsilon_1$ for  convex functions]{inequality}{epiassconvex}
		\begin{equation*}\label{ass:eps_convex}
			\varepsilon_0 > \max\left\{\sqrt{\frac{16D^2\epsilon_f^\prime}{\theta(p-\frac1 2)\min\left\{(1-\eta)^2,
			\frac{1}{(1+\tau)^2}\right\}\min\left\{\frac{1-\theta}{0.5L + \kappa}, \frac{2(1-2\eta-\theta(1-\eta))}{L(1-\eta)} \right\}}}, 
			4\epsilon_f^\prime\right\}, 
			\varepsilon_1\geq  \frac{\epsilon_g}{\eta}
		\end{equation*}
		for some $\eta\in (0,\frac{1-\theta}{2-\theta})$, and $p>\frac{1}{2}$. 
	\end{restatable}

	We restate Assumption \ref{ass:alg_behave} below in the form of a Proposition for the setting where $\phi(x)$ is convex:
	\begin{prop}[Assumption \ref{ass:alg_behave} holds for convex functions]
		\label{prop:ass_convex}
		Let
		\begin{enumerate}
			\item Let $p = 1 - \delta$ (for noiseless and bounded noise), or $p = 1-\delta -\exp\left(-\min\{\frac{u^2}{2\nu^2},\frac{u}{2b}\}\right)$ otherwise. Here $u = \inf_x \{\epsilon_f^\prime - \E[e(x)]\}$. 
			\item $\bar{\alpha} = \min\left\{\frac{1-\theta}{0.5L + \kappa}, \frac{2(1-2\eta-\theta(1-\eta))}{L(1-\eta)} \right\}$,
			\item $h(\alpha) = \frac{\alpha\theta}{4D^2}\min\left\{(1-\eta)^2,
			\frac{1}{(1+\tau)^2}\right\}$, 
			\item $r(\epsilon_f^\prime, E_{k}+E_{k}^+) = \frac{2\epsilon_f^\prime + E_{k}+E_{k}^+}{\varepsilon_0^2} $. 
		\end{enumerate}
		Then the following hold for all $k<T_\varepsilon$:
		\begin{itemize}
			\item [(i)] $h(\bar \alpha)>\frac{r(\epsilon_f^\prime,2\epsilon_f^\prime)}{p-\frac 1 2},$
			\item [(ii)] Iteration $k$ is true with probability at least $p$ conditioned on the outcome of all previous iterations $0,\ldots,k-1$, with some $p \in ( \frac12 + \frac{{r(\epsilon_f^\prime, 2\epsilon_f^\prime)}}{h(\bar{\alpha})}, 1]$.
			\item[(iii)]   If iteration $k$ is true (i.e.  $I_k=1$) and successful, then $Z_{k+1}\leq Z_k-h(A_k)+r(\epsilon_f^\prime, 2\epsilon_f^\prime)$. 
			\item[(iv)] If $A_k  \leq  \bar \alpha$ and iteration $k$ is true  then
			iteration $k$ is also successful.
			\item[(v)] $Z_{k+1}\leq Z_k+r(\epsilon_f^\prime, E_{k}+ E_{k}^+)$ for all $k$.
		\end{itemize}
	\end{prop}

	\begin{proof}
		(i) can be easily verified using the lower bound of $\varepsilon_0$ and $\varepsilon_1$, the definition of $h(\alpha), \bar \alpha$ and $r(\epsilon_f^\prime,2\epsilon_f^\prime)$.
		
		The proofs of (ii) and (iv) are exactly the same as the corresponding proofs for the non-convex setting with the new stopping time and the new definitions for functions $h$ and $r$ for the convex case. 
		
		Next, we prove (iii). 
		By assumption \ref{ass:convex} and Cauchy-Schwarz, $$\phi(x^*)-\phi({X}_k)\geq \nabla\phi({X}_k)^T(x^*-{X}_k)\geq -D\norm{\nabla\phi({X}_k)},$$ for any ${X}_k$, $x^*$ is a minimizer of $\phi$.
		
		Since iteration $k$ is true, we know that $\norm{{G}_k - \nabla \phi({X}_k)} \leq \max\{\epsilon_g, \min\{\tau,\kappa {A}_k\}\norm{{G}_k}\}$. 
		We again consider two cases:
		\begin{itemize}
			\item Suppose $\norm{{G}_k - \nabla \phi({X}_k)} \leq \min\{\tau,\kappa {A}_k\}\norm{{G}_k}$. By the triangle inequality, we get 
			$$\norm{{G}_k} \geq \frac{1}{1+\min\{\tau,\kappa {A}_k\}} \norm{\nabla\phi({X}_k)} \geq \frac{1}{1+\tau}\norm{\nabla \phi({X}_k)}.$$  
			\item Suppose $\norm{{G}_k - \nabla \phi({X}_k)} \leq \epsilon_g$. Since $k < T_\varepsilon$, we have $\norm{\nabla \phi({X}_k)} > \varepsilon_1\geq \frac{\epsilon_g}{\eta}$. This implies that $\norm{{G}_k - \nabla \phi({X}_k)} \leq \eta\norm{\nabla\phi({X}_k)}$. Rearranging this using the triangle inequality, we get that 
			$$\norm{{G}_k} \geq (1-\eta)\norm{\nabla\phi({X}_k)}.$$
		\end{itemize}
		Hence, $$\norm{{G}_k} \geq \min\left\{(1-\eta),
		\frac{1}{1+\tau}\right\}		
		 \norm{\nabla\phi({X}_k)}.$$
		 Following the exact argument of Lemma 4.8 in \cite{berahas2019global}, we have 
		 $$Z_{k+1}\leq Z_k-\frac{{A}_k\theta}{4D^2}\min\left\{(1-\eta)^2,	\frac{1}{(1+\tau)^2}\right\}+\frac{4\epsilon_f^\prime}{\varepsilon_0^2}.$$
		This proves (iii).
		
		Finally, we turn to proving (v). The proof of (v) follows the same steps as the proof of (iii), but is simpler. If $k$ is an unsuccessful step, then $Z_{k+1} = Z_k$, so the inequality in (v) is clearly satisfied.
		
		If $k$ is successful, using a similar argument as in Lemma 4.9 in \cite{berahas2019global} \rev{(with $\frac{4\epsilon_f}{\eps^2}$ being replaced by $r(\epsilon_f', E_k + E_k^+) =\frac{2\epsilon_f^\prime + E_{k}+E_{k}^+}{\varepsilon_0^2}$)}, 
		we have
		$$Z_{k+1}\leq Z_k+\frac{2\epsilon_f^\prime + E_{k}+E_{k}^+}{\varepsilon_0^2}.$$ 
		This completes the proof.
	\end{proof}
		
In this setting, $r(\epsilon_f^\prime, E_{k}+E_{k}^+) = \frac{2\epsilon_f^\prime + E_{k}+E_{k}^+}{\varepsilon_0^2}$.
By \rev{\Cref{prop:subexp}}, the subexponential parameters of $r(\epsilon_f^\prime, E_k+E_k^+)$ are $(\nu_r, b_r) = (2\nu/\varepsilon_0^2, 2b/\varepsilon_0^2)$.

Together with Theorem \ref{thm:subexp_noise}, we obtain the explicit complexity bound in the convex setting.
		
		\begin{theorem}
				Suppose the Inequality \ref{ass:eps_convex} on $\varepsilon_0, \varepsilon_1$ is satisfied for some $\eta\in (0,\frac{1-\theta}{2-\theta})$, and Assumptions \ref{ass:Lipschitz} and \ref{ass:convex} hold, then we have the following bound on the iteration complexity:
 For any $s \geq 0$, $\hat{p} \in ( \frac12 + \frac{4\epsilon_f^\prime/\varepsilon_0^2+s}{C}, p)$, and 
$t \geq \frac{R}{\hat{p} - \frac12 - \frac{4\epsilon_f^\prime/\varepsilon_0^2+s}{C}}$,
$$\P\left(T_\varepsilon \leq t\right) \geq 1 - \exp\left(-\frac{(p-\hat{p})^2}{2p^2}t\right) - \exp\left(-\min\left\{\frac{s^2t\varepsilon_0^4}{8\nu^2},\frac{st\varepsilon_0^2}{4b}\right\}\right).$$
Here, $R = \frac{1}{C} \cdot \left(\frac{1}{\varepsilon_0} - \frac{1}{\phi(x_0) - \phi^*}\right) + \max\left\{-\frac{\ln \alpha_0 - \ln \bar \alpha}{2\ln \gamma}, 0\right\}$, $C=\min\left\{\frac{1}{(1+\tau)^2},(1-\eta)^2\right\}\frac{\bar\alpha \theta}{4D^2},$
with $p$ and $\bar \alpha$ as defined in Proposition \ref{prop:ass_convex}. 
\end{theorem}

\section{Oracles}\label{oracles}
In this section, we briefly discuss how the first- and zeroth-order oracles used by our framework can be produced in  two common stochastic optimization settings. \rev{For further detail, we refer the readers to \revv{the conference} version of this paper \cite{jin2021high}.} 

\subsection{Expected loss minimization}
Let  $\phi(x)=\E_{d \sim \mathcal{D}}[l(x,d)]$, where $x$ is the model parameters, $d$ is a data sample following distribution $\mathcal{D}$, and $l(x,d)$ is the loss when the model parameterized by $x$ is evaluated on data point $d$.

In this case, the zeroth- and first-order oracles are computed by sample averaging over a minibatch $\mathcal{S}$ sampled from $\mathcal{D}$: 
\begin{equation}\label{eq:erm_oracle}
f(x, \mathcal{S}) = \frac{1}{|\mathcal{S}|}\sum_{d\in \mathcal{S}}l(x,d),\quad g(x, \mathcal{S}) = \frac{1}{|\mathcal{S}|}\sum_{d\in \mathcal{S}}\nabla_x l(x,d).
\end{equation}
In general, $\mathcal{S}$ can be chosen to depend on $x$. We now show how our zeroth- and first-order oracle conditions are satisfied by selecting an appropriate 
sample size  $|\mathcal{S}|$. 

\begin{restatable}{prop}{zerothorder}
	\label{prop:zeroth_order}
Let $\hat e(x,d):={l(x,d) - \phi(x)}$ be a $(\hat{\nu}(x), \hat{b}(x))$-subexponential random variable and $\Var_{d\sim \mathcal{D}}\left[l(x,d)\right] \leq \hat{\epsilon}(x)^2$, for some $\hat{\nu}(x), \hat{b}(x), \hat{\epsilon}(x)$. 
Let  $e(x, \mathcal{S}) = \abs{f(x,\mathcal{S}) - \phi(x)}$ and $N=|\mathcal{S}|$,  then
	$$\E_{\mathcal{S}} \left [e(x, \mathcal{S})\right ] \leq \frac{1}{\sqrt{N}}\hat{\epsilon}(x) \quad \text{and} \quad \text{$e(x, \mathcal{S})$ is $(\nu(x), b(x))$-subexponential,}$$
with  $\nu(x) = b(x) = 8e^2\max\left\{\frac{\hat{\nu}(x)}{\sqrt{N}}, \,\hat{b}(x)\right\}$. 
\end{restatable}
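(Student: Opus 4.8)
The plan is to split the proposition into its two assertions and to treat the passage from the centered error to its absolute value as the technical heart. The starting observation is that, since $\phi(x)=\E_{d\sim\mathcal{D}}[l(x,d)]$, each summand $\hat e(x,d)=l(x,d)-\phi(x)$ is mean-zero, so the centered quantity $W:=f(x,\sS)-\phi(x)=\tfrac1N\sum_{d\in\sS}\hat e(x,d)$ is an average of $N$ i.i.d.\ mean-zero draws. For the mean bound I would simply invoke Jensen's inequality in the form $\E|W|\le\sqrt{\E[W^2]}$; since $\E[W]=0$ we have $\E[W^2]=\Var(W)$, and independence gives $\Var(W)=\tfrac1N\Var_{d\sim\mathcal{D}}[l(x,d)]\le\hat\epsilon(x)^2/N$. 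This yields $\E_{\sS}[e(x,\sS)]=\E|W|\le\hat\epsilon(x)/\sqrt N$ directly.

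For the sub-exponential claim I would first control $W$ itself. Because moment generating functions of independent variables multiply, $\E[e^{\lambda W}]=\prod_{d\in\sS}\E[e^{(\lambda/N)\hat e(x,d)}]\le\exp(\lambda^2\hat\nu(x)^2/(2N))$ whenever $|\lambda/N|\le 1/\hat b(x)$, so $W$ is $(\hat\nu(x)/\sqrt N,\hat b(x)/N)$-subexponential. Enlarging the scale parameter only shrinks the admissible range of $\lambda$, so $W$ is a fortiori $(M,M)$-subexponential for the single parameter $M:=\max\{\hat\nu(x)/\sqrt N,\hat b(x)\}$, which matches the quantity appearing in the statement and is convenient for what follows.

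The main obstacle is the last step: passing from the two-sided MGF control of the mean-zero $W$ to a one-sided MGF bound for the centered absolute value $e(x,\sS)-\E[e(x,\sS)]=|W|-\E|W|$. The naive estimate $e^{\lambda|W|}\le e^{\lambda W}+e^{-\lambda W}$ gives only $\E[e^{\lambda|W|}]\le 2\exp(\lambda^2M^2/2)$, and the stray factor of $2$ cannot be absorbed into $\exp(\lambda^2\nu^2/2)$ for small $\lambda$, so this route fails near $\lambda=0$. Instead I would go through moments: convert the $(M,M)$-subexponential bound into the tail estimate $\P(|W|\ge t)\le 2\exp(-\tfrac12\min\{t^2/M^2,\,t/M\})$, integrate it to obtain polynomial moment growth $\|W\|_p\lesssim Mp$, pass to the centered variable via $\||W|-\E|W|\|_p\le 2\|W\|_p$, and finally reconstitute the MGF from its Taylor series $\E[e^{\lambda(|W|-\E|W|)}]=1+\sum_{k\ge2}\tfrac{\lambda^k}{k!}\E[(|W|-\E|W|)^k]$, using $k!\ge(k/e)^k$ to sum the resulting geometric series on a range of the form $\lambda\le 1/b$. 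The vanishing of the $k=1$ term, which is exactly the effect of centering, is what removes the fatal constant $2$ and restores the clean $\exp(\lambda^2\nu^2/2)$ shape; tracking the absolute constants through this tail-to-moment-to-MGF chain is precisely what produces the stated parameters $\nu(x)=b(x)=8e^2M$, with the equality $\nu=b$ reflecting that both the scale and the reciprocal range emerge proportional to the same quantity.
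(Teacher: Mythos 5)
Your proposal is correct and takes essentially the same approach as the paper: bound the mean via the second moment and Jensen, establish sub-exponentiality of the centered average $W$, then handle $\abs{W}-\E\abs{W}$ through moment growth (the $\psi_1$-norm) with a factor-$2$ triangle inequality for centering, and convert back to an MGF bound whose linear term vanishes. The only difference is presentational: the paper outsources your tail-to-moment-to-MGF chain to Proposition 2.7.1 of \cite{vershynin2018high} (applying (e)$\to$(b) and (b)$\to$(e)), whereas you re-derive that equivalence by hand, arriving at the same constants $\nu(x)=b(x)=8e^2\max\left\{\frac{\hat{\nu}(x)}{\sqrt{N}},\,\hat{b}(x)\right\}$.
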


\begin{proof}
	See \cite[Appendix A]{jin2021high}.
 \end{proof}

  Thus, $f(x, \mathcal{S})$ is a zeroth-order oracle with $\epsilon_f = \sup_x\frac{1}{\sqrt{N}}\hat{\epsilon}(x)$,  $\nu = \sup_x \nu(x)$, and $b = \sup_x b(x)$, and $\epsilon_f$ can be made arbitrarily small by taking a large enough sample.

Now suppose, for some $M_c, M_v\geq 0$ and for all $x$, 
\begin{equation}\label{BCN}
	\E_{d\sim \mathcal{D}}\norm{\nabla l(x,d)-\nabla\phi(x)}^2\leq M_c+M_v\norm{\nabla\phi(x)}^2,
\end{equation} 
then $g(x, \mathcal{S})$ is a first-order oracle.   
\begin{restatable}{prop}{firstorder}
	\label{prop:first_order}
Let $g= g(x, \mathcal{S})$. Assuming $\E_{d\sim \mathcal{D}}\nabla l(x,d)=\nabla \phi(x)$, then 
	$$
	\abs{\mathcal{S}} \geq\frac{M_c+M_v\norm{\nabla\phi(x)}^2}{\delta}\min\left\{\frac{1}{\epsilon_g^2} ,\frac{(1+\eta)^2}{\eta^2\norm{\nabla\phi(x)}^2} \right\}.
	$$
implies 
\[
\P\left(\norm{g - \nabla \phi(x)} \leq \max\{\epsilon_g, \eta\|g\|\}\right) \geq 1-\delta.
\]
\end{restatable}

\begin{proof} 
See \cite[Appendix A]{jin2021high}. 
\end{proof}

Choosing $\eta=\min\{\tau,\kappa\alpha\}$ gives the condition on the sample size $\abs{\mathcal{S}} $ which provides a valid first-order oracle. 
\rev{One can also view the result from the perspective of choosing the sample size first, which dictates the values of $\epsilon_g$, $\tau$ and $\kappa$ that are achieved by the first-order oracle, which in turn determines how far and how fast Algorithm  \ref{alg:ls} will converge. }


%

 \subsection{Randomized finite difference gradient approximation}
Gradient estimates based on randomized finite differences using noisy function evaluations have become popular for zeroth-order optimization, particularly for model-free policy optimization in reinforcement learning \cite{salimans2016evolution, fazel2018global}.  

The first-order oracle is obtained using the zeroth-order oracle as follows.
Let $\sU=\{u_i: i=1, \ldots, \abs{\mathcal{U}}\}$  be a set of random vectors, with each vector following some ``nice'' distribution (e.g. standard Gaussian). Then,
\begin{align}		\label{eq:GSG_intro}
	g(x,\sU) =  \sum_{i=1}^{\abs{\mathcal{U}}}\frac{f(x+\sigma u_i,\xi) - f(x,\xi)}{\sigma\abs{\mathcal{U}}}  u_i, 
\end{align}
where  $\sigma$ is the {sampling radius}. The proposition below shows that \eqref{eq:GSG_intro}  with a large enough sample size gives a first-order oracle. 
\begin{restatable}{prop}{fdfoo}
	\label{prop:fdfoo}
	%
	Assume that $e(x) \leq \epsilon_f$ deterministically,  for any $x$. Let $g = g(x, \mathcal{U})$, and fix $\epsilon_g = 2 \left(\sqrt{n}L\sigma + \frac{\sqrt{n}\epsilon_f}{\sigma}\right)$ where $n$ is the dimension of $x$. Then 
	$$\abs{\mathcal{U}} \geq \frac{\frac{3}{4}L^2\sigma^2n(n+2)(n+4)+\frac{12\epsilon_f^2}{\sigma^2}n+18n\norm{\nabla\phi(x)}^2}{\delta}\min\left\{\frac{4}{\epsilon_g^2} ,\frac{1}{\left(\frac{\eta}{1+\eta}\norm{\nabla\phi(x)} - \frac{\epsilon_g}{2}\right)^2}\right\}$$
	implies
	\[
	\P\left(\norm{g - \nabla \phi(x)} \leq \max\{\epsilon_g, \eta\|g\|\}\right) \geq 1-\delta.
	\]
	Note that in the setting, $\epsilon_g$ is a fixed bias dependent on $\sigma$, and cannot be made arbitrarily small. 
\end{restatable}
\begin{proof}
See \cite[Appendix A]{jin2021high}. 
\end{proof}

Letting $\eta=\min\{\tau, \kappa\alpha\}$ in the above result provides a valid first-order oracle. 

It is straightforward to relax assumption that   $e(x) \leq \epsilon_f$ for all $x$,  in the above result, by replacing it with a condition $e(x)\leq \epsilon_f^\prime$ and $e(x+\sigma u_i)\leq \epsilon_f^\prime$, for all $u_i\in \sU$, for some $\epsilon_f^\prime>\epsilon_f$. This latter condition happens with high probability dependent of the value on $\epsilon_f^\prime$, due to the properties of the zeroth-order oracle. 

%
%
%

\begin{restatable}{remark}{remarkprop}
	Note that $\epsilon_g$ defines the neighborhood of convergence for any method that relies on this oracle, and the smallest value for $\epsilon_g$ is achieved by setting
	$\sigma={\cal O}(\sqrt{\epsilon_f})$.   Let us now discuss the minibatch size. Under the assumption that $\epsilon_f$ is small, $\frac{3}{4}L^2\sigma^2n(n+2)(n+4)+\frac{12\epsilon_f^2}{\sigma^2}$ is also small. Thus when $\|\nabla\phi(x)\|$ is larger than or on the order of $\epsilon_g$, then
	the sample set size remains constant and is proportional to $n$.   In \cite{nesterov2017random} a constant step size stochastic gradient descent is applied using sample size $\abs{\mathcal{U}}=1$, thus each step requires about $n$ fewer samples. However, the step size has to be roughly $n$ times smaller to account for the variance of the stochastic oracles based on one sample, thus the overall complexity is the same. 
\end{restatable}

Other finite difference approximation schemes and their centralized versions (see \cite{berahas2019theoretical} for a reference on these) also give suitable first-order oracles. \rev{For brevity, we do not treat them here.}

\section{Experiments}
\label{sec:experiments}
In this section, we illustrate that a step search stochastic algorithm can be efficient in practice. It is important to note we implement the zeroth- and first-order oracles based on fixed minibatch sizes, as is common in stochastic gradient methods, due to implementational considerations. Thus we do not check or ensure that these oracles actually satisfy the properties that our theory requires. 
The main goal of these experiments is to validate that even a simple implementation of the  method can be competitive with both standard methods such as ADAM \cite{kingma2017adam} and the  ``SGD + Armijo" method proposed in \cite{vaswanie2019painless} and that  while it is important to use $\epsilon_f^\prime>0$ in the step acceptance criterion, estimating this constant is not difficult. A careful exploration of practical variants of SASS, such as heuristics ensuring oracle properties (e.g. adaptive minibatch size) and best choices for  $\gamma$ and $\epsilon_f^\prime$  are subjects for future research.


\subsection{Kernel logistic regression} 
\label{sec:convex_exp}
We conduct experiments on all the datasets for binary classification with $150$ to $5000$ data points from the Penn Machine Learning Benchmarks repository (PMLB) \cite{romano2021pmlb}. In total, there are 64 such datasets. Each binary classification problem is formulated as a logistic regression problem with an RBF kernel (with parameter $\sigma = 1$). 

We compare the following three algorithms, each is given a budget of up to $100$ epochs, and they are implemented as follows.

\begin{itemize} 
\item {\bf SASS.} The zeroth- and first-order oracles are implemented using random  mini-batches of size $128$. We estimate $\epsilon_f$ at the beginning of every epoch (i.e. every $K$ iterations, where $K$ equals the total number of data samples divided by $128$), by computing $\frac{1}{5}$ times the empirical standard deviation of $30$  zeroth-order oracle calls with batch size $128$ at the current point. \rev{Figure \ref{epi_f} shows the performance of SASS with different choices of $\epsilon_f^\prime$, with $\epsilon_f^\prime$ being $0$, $\frac 1 5$, $\frac 1 2$, and $1$ times the empirical standard deviation of the zeroth-order oracle. We observe that the algorithm is fairly robust to how $\epsilon_f^\prime$ is chosen in general, as long as it is not chosen to be zero.}
\begin{figure}
	\begin{subfigure}{.33\textwidth}
		\centering
		\includegraphics[trim=20 0 40 0, clip, width=\linewidth]{./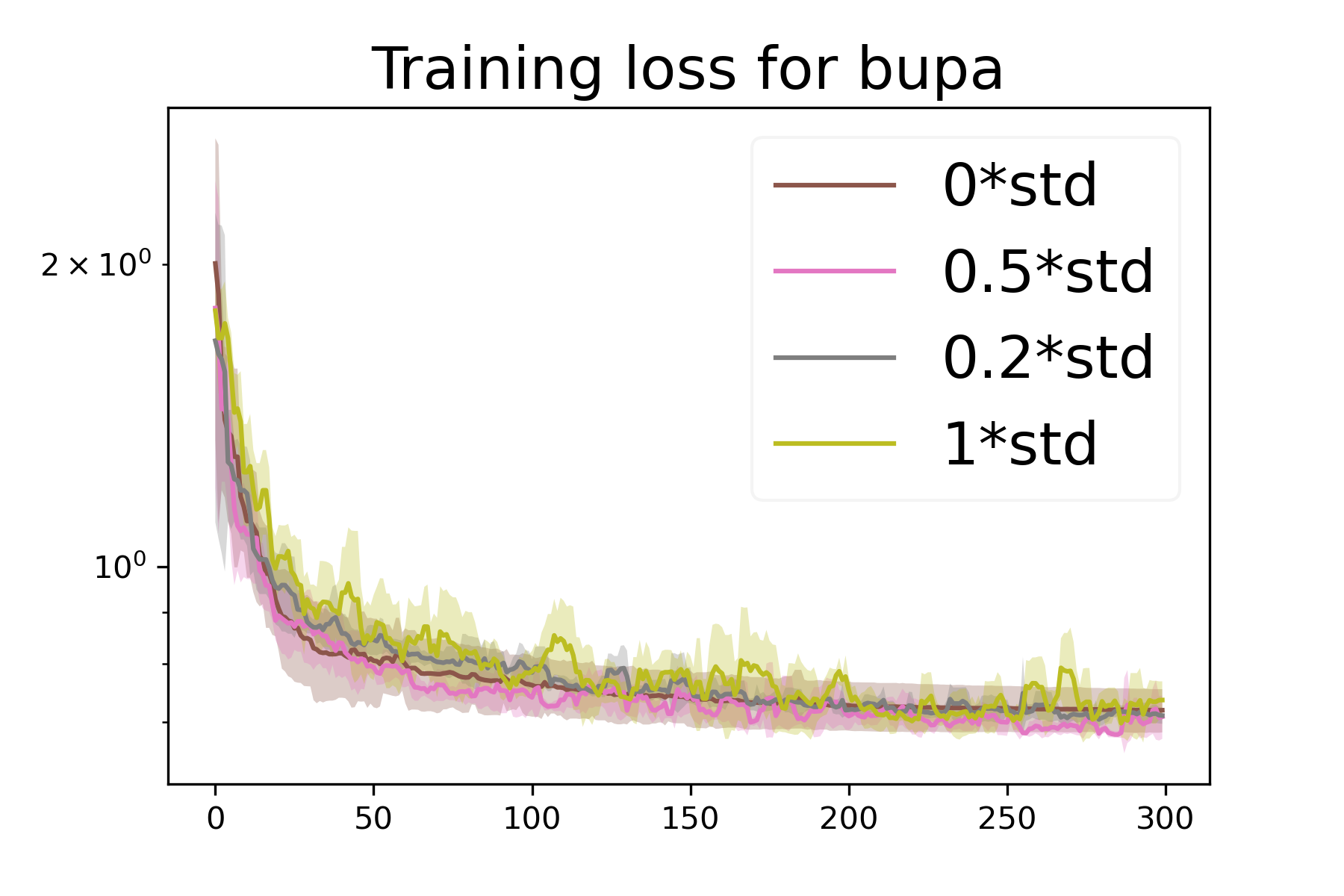}
	\end{subfigure}
	\begin{subfigure}{.33\textwidth}
		\centering
		\includegraphics[trim=8 0 40 0, clip, width=\linewidth]{./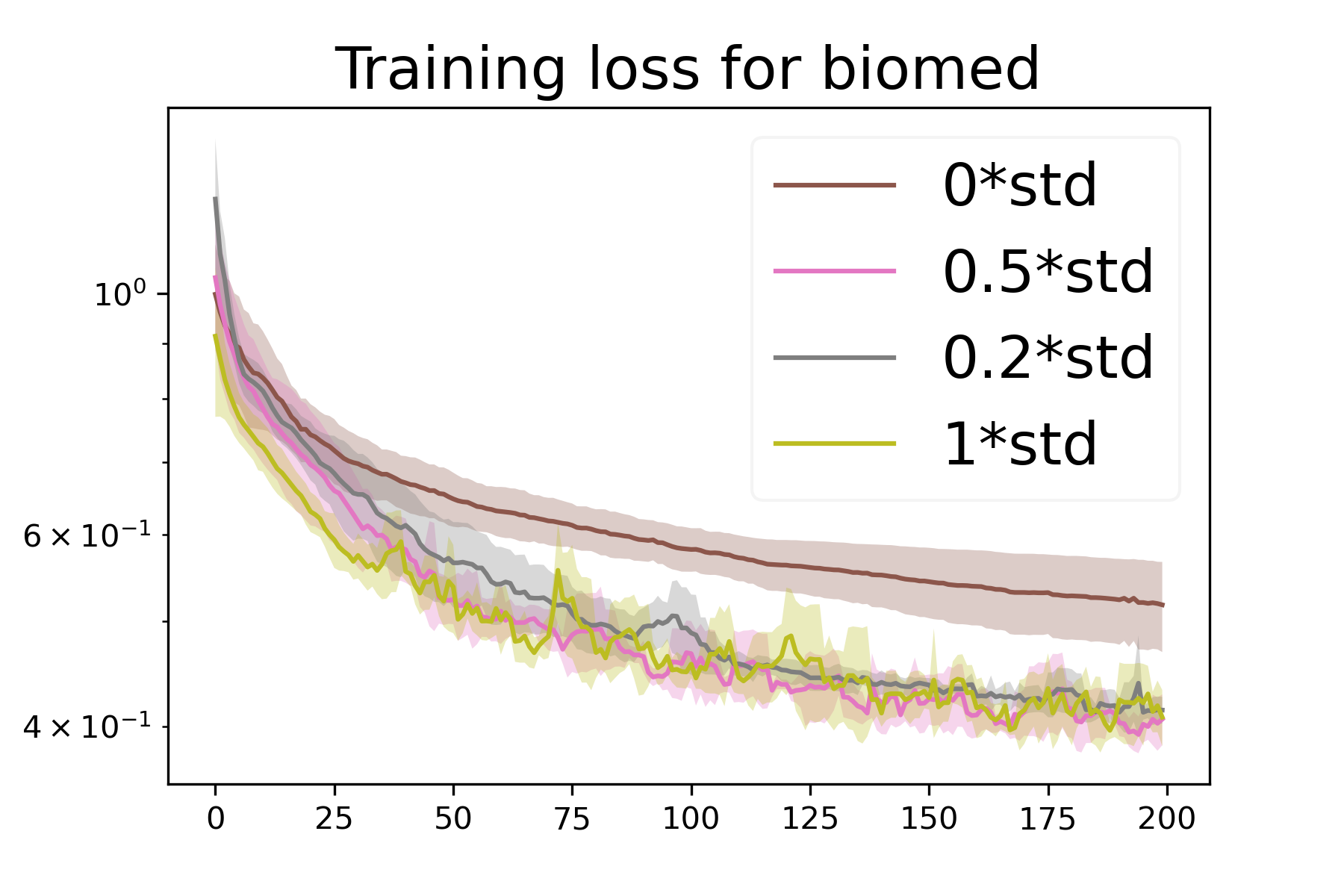}
	\end{subfigure}
	\begin{subfigure}{.33\textwidth}
		\centering
		\includegraphics[trim=5 0 40 0, clip, width=\linewidth]{./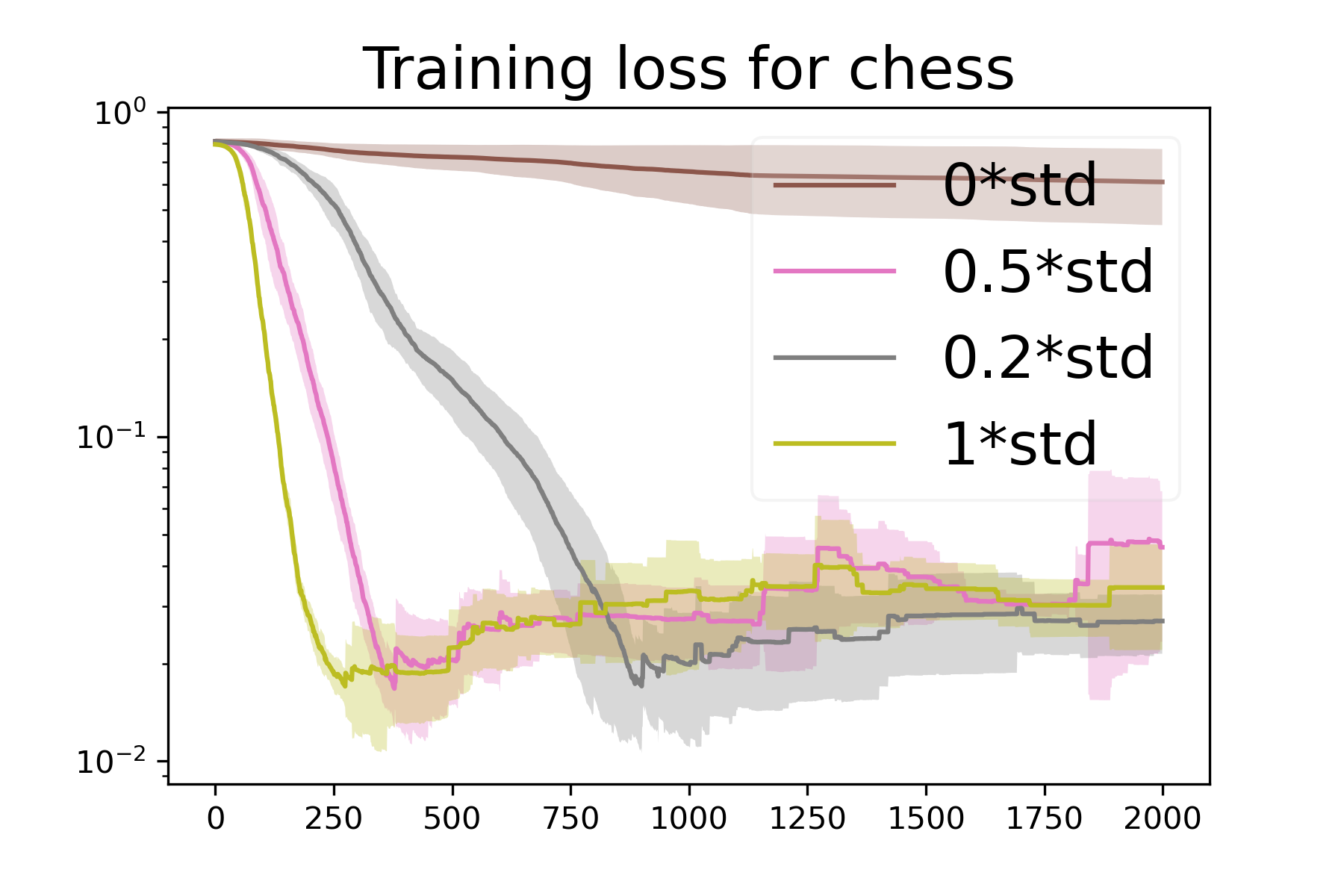}
	\end{subfigure}
	\caption{SASS with different choices of $\epsilon_f^\prime$.}
	\label{epi_f}
\end{figure}
The parameters for all runs were chosen as $\gamma=0.9$, $\theta = 0.2$, $\alpha_0=1$. 

\item {\bf SLS.}  The SLS algorithm (also referred to as  ``SGD + Armijo") proposed in \cite{vaswanie2019painless} differs from SASS in that $\epsilon_f^\prime=0$ and that the same mini-batch is used while backtracking until the Armijo condition is satisfied.  We implemented the algorithm using mini-batch size $128$ and the parameters suggested in Appendix G of their paper.
We tried various parameter combinations for SLS and found the performance of the  suggested parameters to work best.   

\item {\bf ADAM.} ADAM with default parameters as in \cite{adam}, mini-batch size $128$, \rev{with a range of learning rates: $10^{-1}, 10^{-2}, 10^{-3}$ (default), $10^{-4}, \text{ and }10^{-5}$. }
\end{itemize}

We conducted $5$ trials for each dataset and ran each algorithm 
with initial points taken randomly from a standard Gaussian distribution. 
\rev{In order to compare the amount of work required by each algorithm more fairly, each algorithm is given the same budget of total work, in terms of the number of inner products. For example, ADAM always runs for $100$ epochs, since it requires $1$ inner product calculation per iteration, while SASS always runs for $50$ epochs, since it requires $2$ inner product calculations per iteration.}  \revv{Note that SASS requires 2 inner products because we use the same batch for the function value estimate and the gradient estimate at the current point $x_k$. Thus it requires one inner product to compute $f(x_k)$ and $g(x_k)$, and another to compute $f(x_k^+)$.} We compare the overall performance of the three algorithms in the following way. 
For each dataset and algorithm, the median best value is defined as the median of the minimum test loss attained over 5 different trials. For each dataset we record the difference between the median best values achieved by SLS vs. SASS. The same is done for ADAM vs. SASS. \rev{Under this metric, SASS achieves better test loss than SLS algorithm in 32 out of $64$ datasets. When compared to ADAM, SASS performed better on test loss (in some cases significantly) for all values of the learning rate except $10^{-1}$. For this learning rate, ADAM performed better than SASS on $47$ out of $64$ datasets. We conclude that SASS is reasonably competitive with these other algorithms, while not requiring step size tuning and having stronger theoretical properties. }

\subsection{MNIST \rev{and Fashion MNIST}}

We now consider non-convex problems. \rev{We train three different neural network architectures, using the softmax loss function}. The first architecture is a multi-layer perceptron (MLP) neural network that has four layers: an input layer with $784$ nodes, two hidden layers with $512$ and $256$ nodes, and an output layer with $10$ nodes. All activation functions are ReLU. This is the same architecture as in \cite{vaswanie2019painless}. The second network is a small convolutional neural network  (CNN) that  in addition to the input and output layers, has two convolutional layers and one fully connected layer. Each convolutional layer uses a $3 \times 3$ kernel with a stride length of $1$, and is followed by a $2 \times 2$ max pooling. This architecture follows the tutorial at \href{https://medium.com/swlh/pytorch-real-step-by-step-implementation-of-cnn-on-mnist-304b7140605a}{this link}\footnote{https://medium.com/swlh/pytorch-real-step-by-step-implementation-of-cnn-on-mnist-304b7140605a}. \rev{The third network is ResNet18 \cite{he2016deep}. We note that the parameter choices and estimation of $\eps_f'$ for SASS are done in the same way as in \Cref{sec:convex_exp}. We tested the MLP and CNN networks on the MNIST dataset \cite{lecun2010mnist}, and ResNet18 on Fashion MNIST \cite{xiao2017fashion}. In Figure \ref{MNIST1}, we plot the results for SASS, SLS, and ADAM with 5 learning rates ($10^{-1}, 10^{-2}, 10^{-3}, 10^{-4}, 10^{-5}$).} 


In Figure \ref{MNIST1}, the left plots show the progress of the {training} loss of each algorithm, 
the middle plots show the progress of the test loss, and 
the right plots show the step sizes for SASS and SLS. \rev{The $x$-axis measures the total number of passes (forward and backward) through the neural network, and is a proxy for the total work performed by the algorithm.} \revv{For SASS, we used the same batch for the function value and gradient at the current point as in the convex experiments. Thus each iteration of SASS requires 2 forward passes and 1 backward pass, for a total of 3 passes per iteration. On the other hand, ADAM requires 1 forward pass and 1 backward pass per iteration, for a total of 2 passes per iteration. SLS has a variable number of passes per iteration depending on how many times the algorithm backtracks.}

In these  results, although the oracles are obtained just by using a fixed batch of $128$ data points (which is not adaptive according to the step sizes as suggested by the theory), SASS still works reasonably well, especially on the CNN. We hypothesize that the reason SLS does not perform well on the CNN is because the step size becomes quite small and the algorithm does not manage to progress. SASS however, is able to take large steps and progresses well in this case. Interestingly, SASS performs quite well in terms of the test loss, which represents the expected loss function $\phi$. 
\rev{
For these problems, ADAM performs best with learning rate either $10^{-4}$ or $10^{-5}$, which are different from both the default learning rate ($10^{-3}$) and the best value for the convex problems ($10^{-1}$). }


\begin{figure}[ht]\centering
	\begin{subfigure}{.32\textwidth}
		\centering
		\includegraphics[trim=3 0 40 0, clip, width=\linewidth]{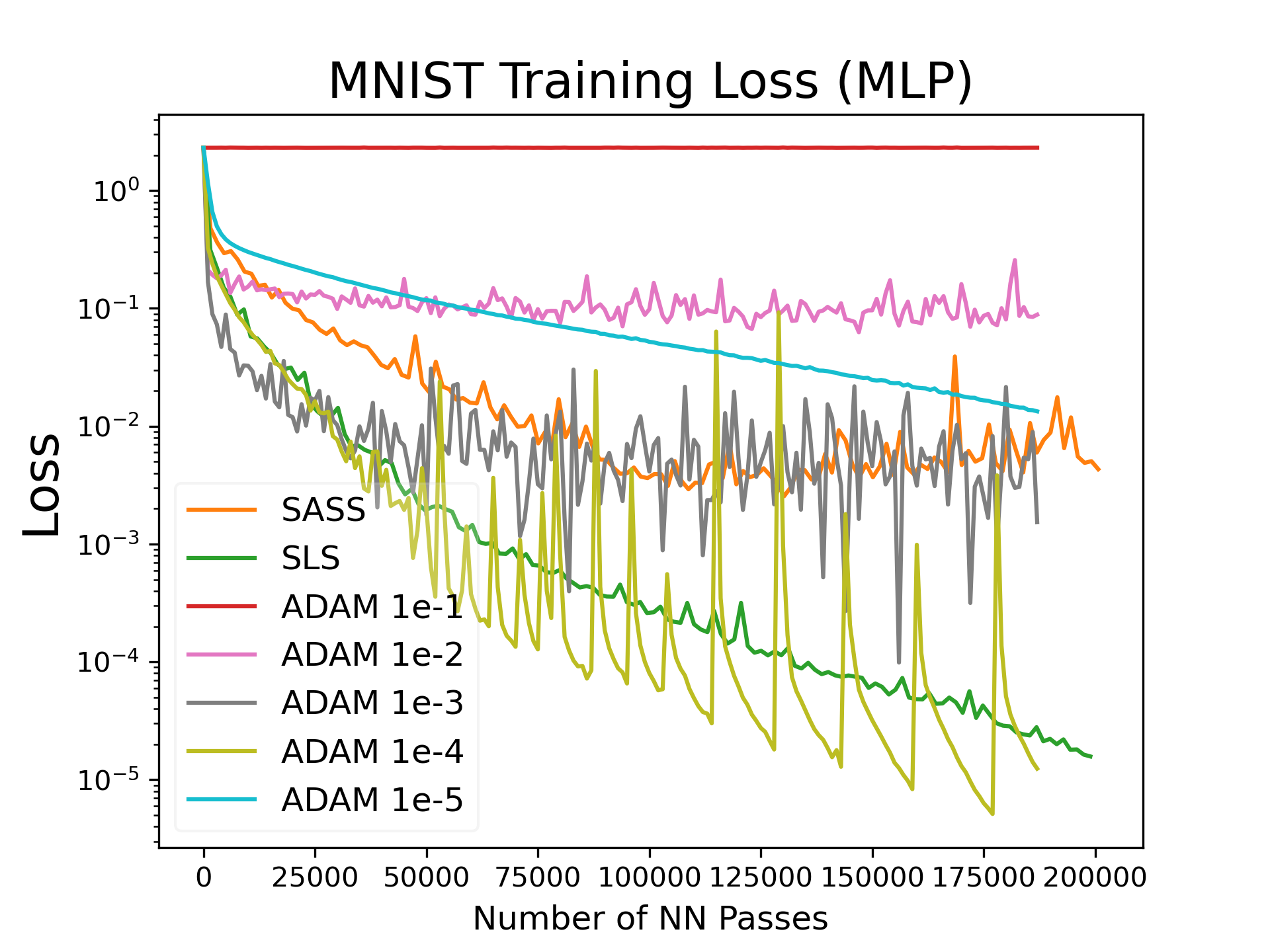}
	\end{subfigure}
	\begin{subfigure}{.32\textwidth}
		\centering
		\includegraphics[trim=3 0 40 0, clip, width=\linewidth]{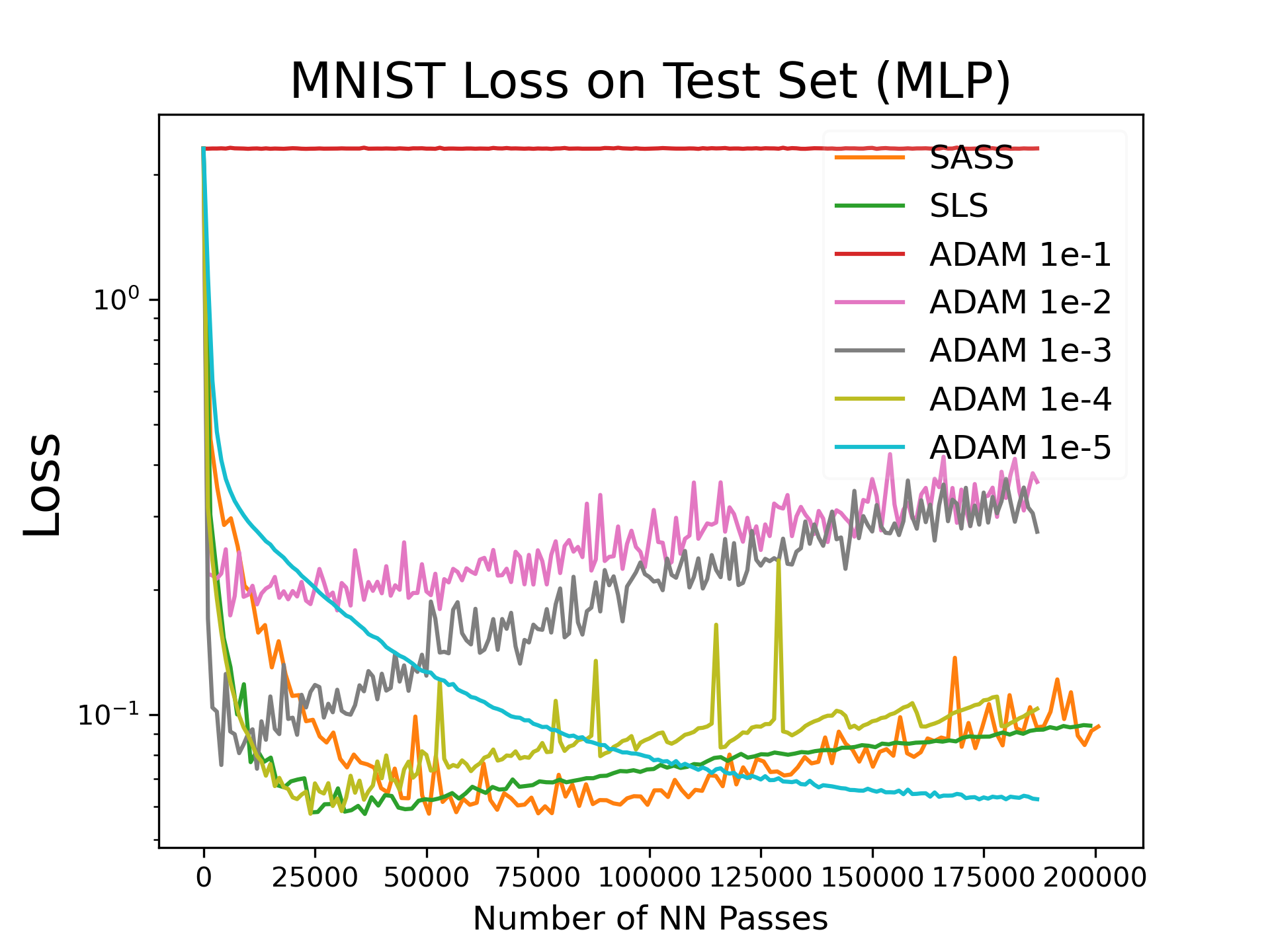}
	\end{subfigure}
	\begin{subfigure}{.32\textwidth}
	\centering
	\includegraphics[trim=3 0 40 0, clip, width=\linewidth]{./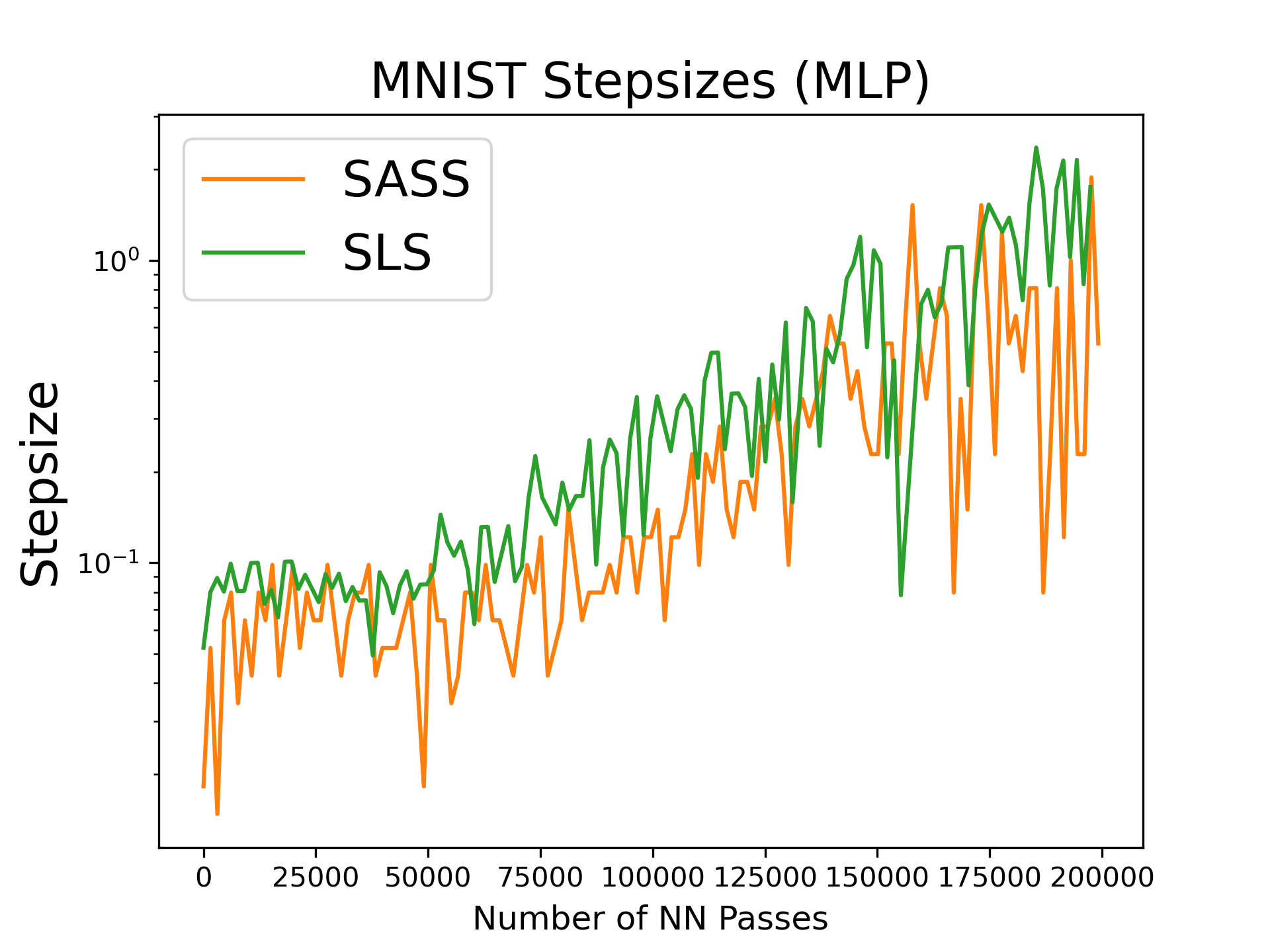}
\end{subfigure}

	\begin{subfigure}{.32\textwidth}
	\centering
	\includegraphics[trim=3 0 40 0, clip, width=\linewidth]{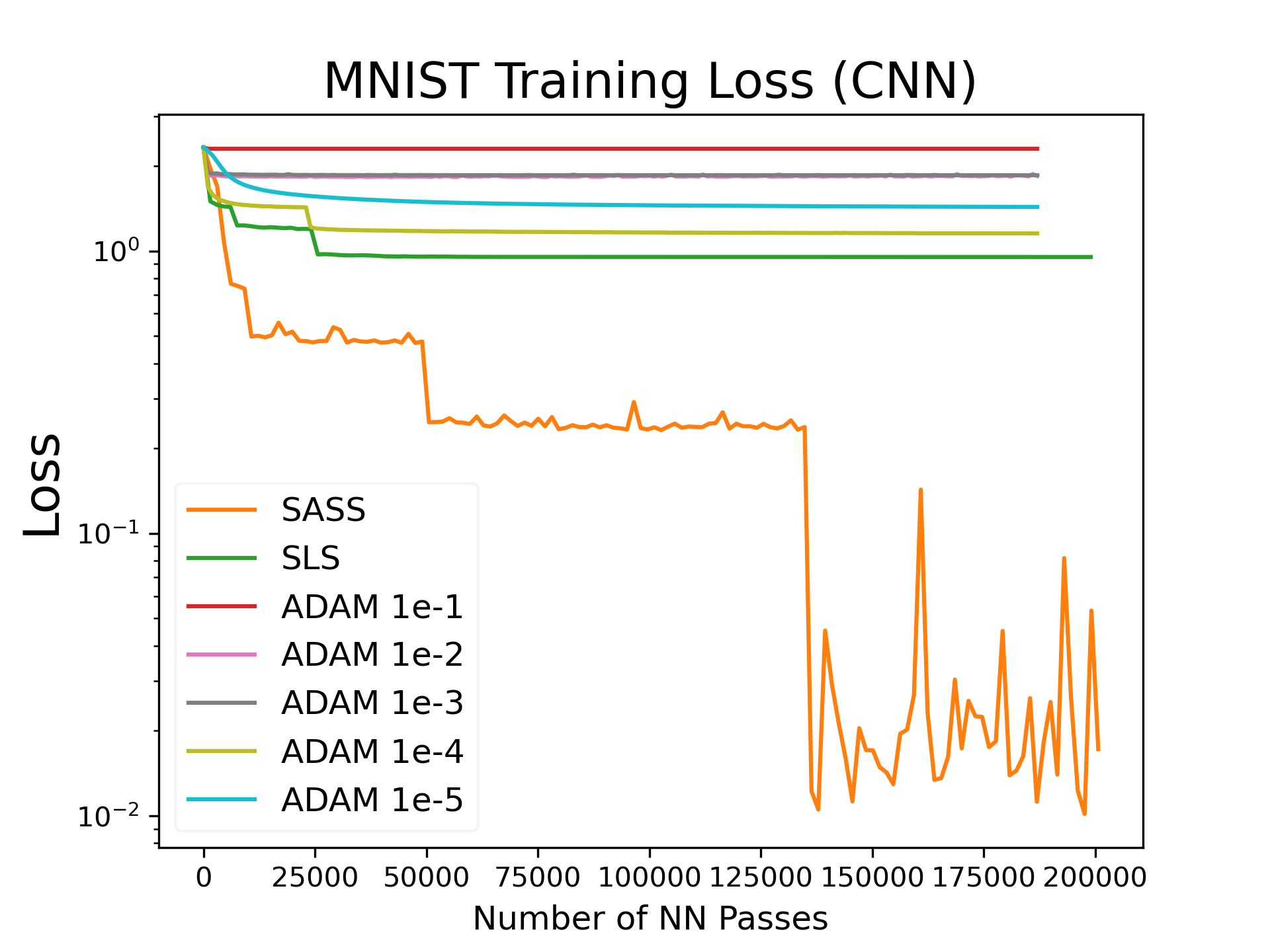}
\end{subfigure}
\begin{subfigure}{.32\textwidth}
	\centering
	\includegraphics[trim=3 0 40 0, clip, width=\linewidth]{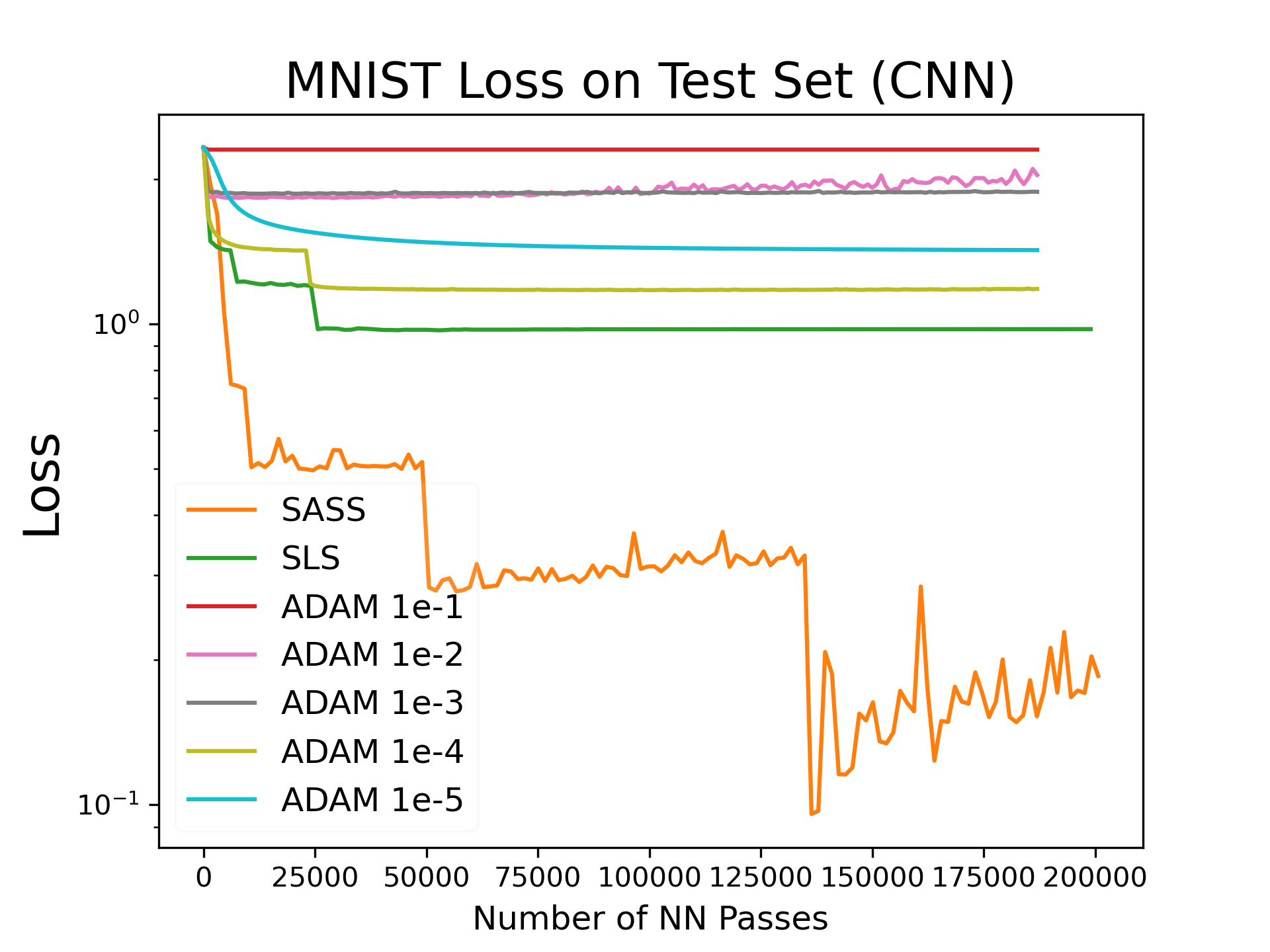}
\end{subfigure}
	\begin{subfigure}{.32\textwidth}
	\centering
	\includegraphics[trim=3 0 40 0, clip, width=\linewidth]{./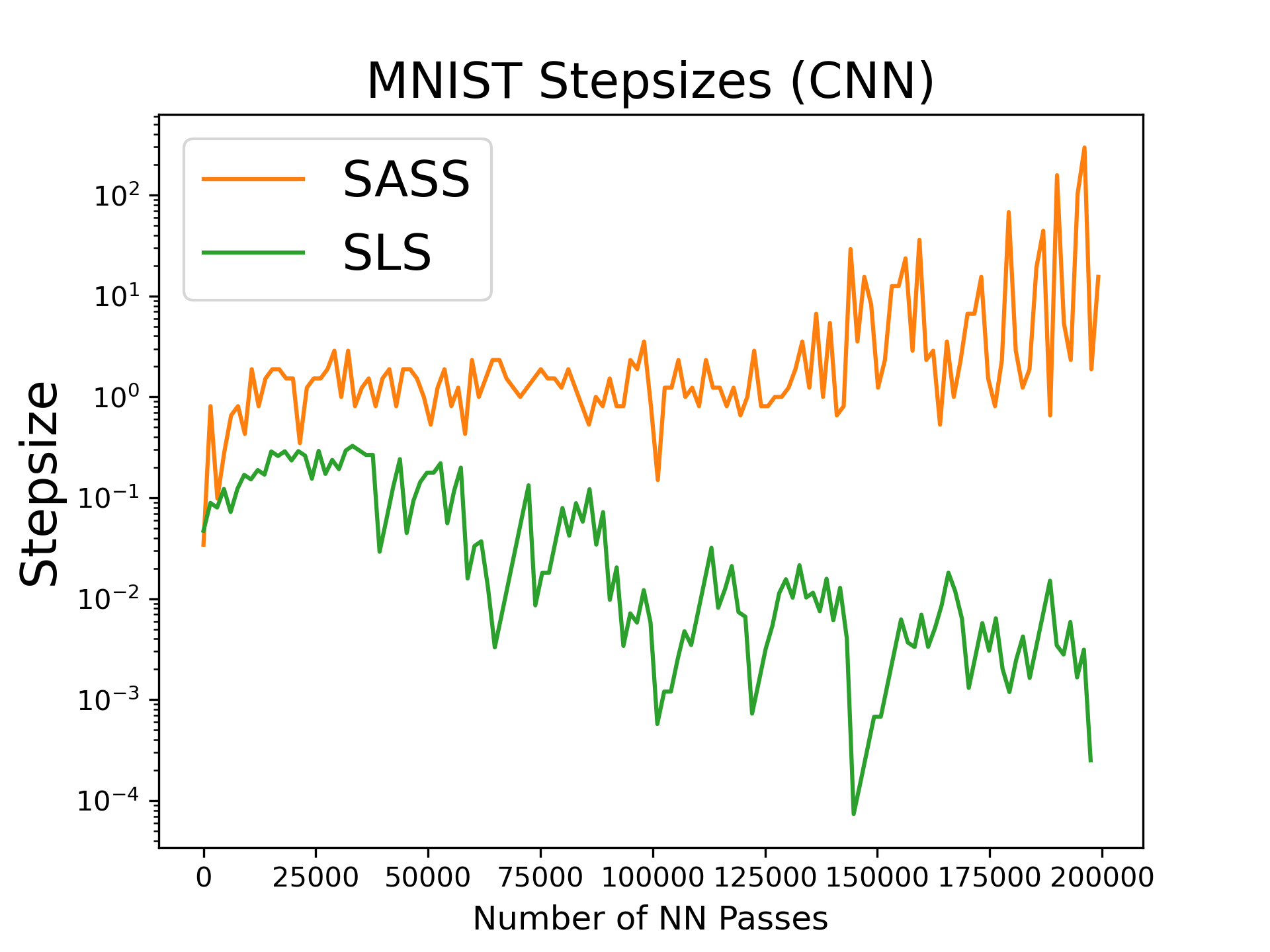}
\end{subfigure}

\begin{subfigure}{.32\textwidth}
	\centering
	\includegraphics[trim=3 0 40 0, clip, width=\linewidth]{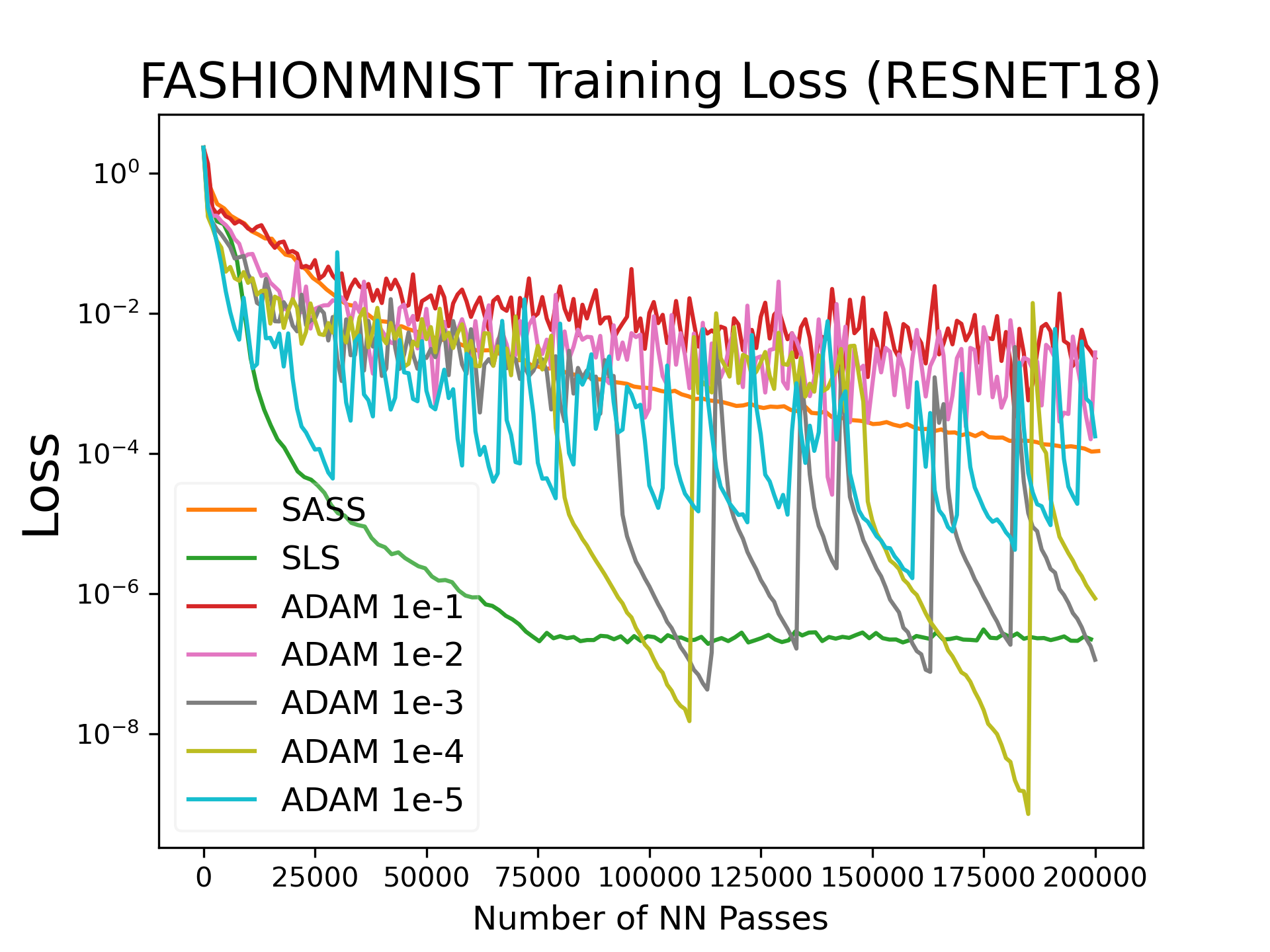}
\end{subfigure}
\begin{subfigure}{.32\textwidth}
	\centering
	\includegraphics[trim=3 0 25 0, clip, width=\linewidth]{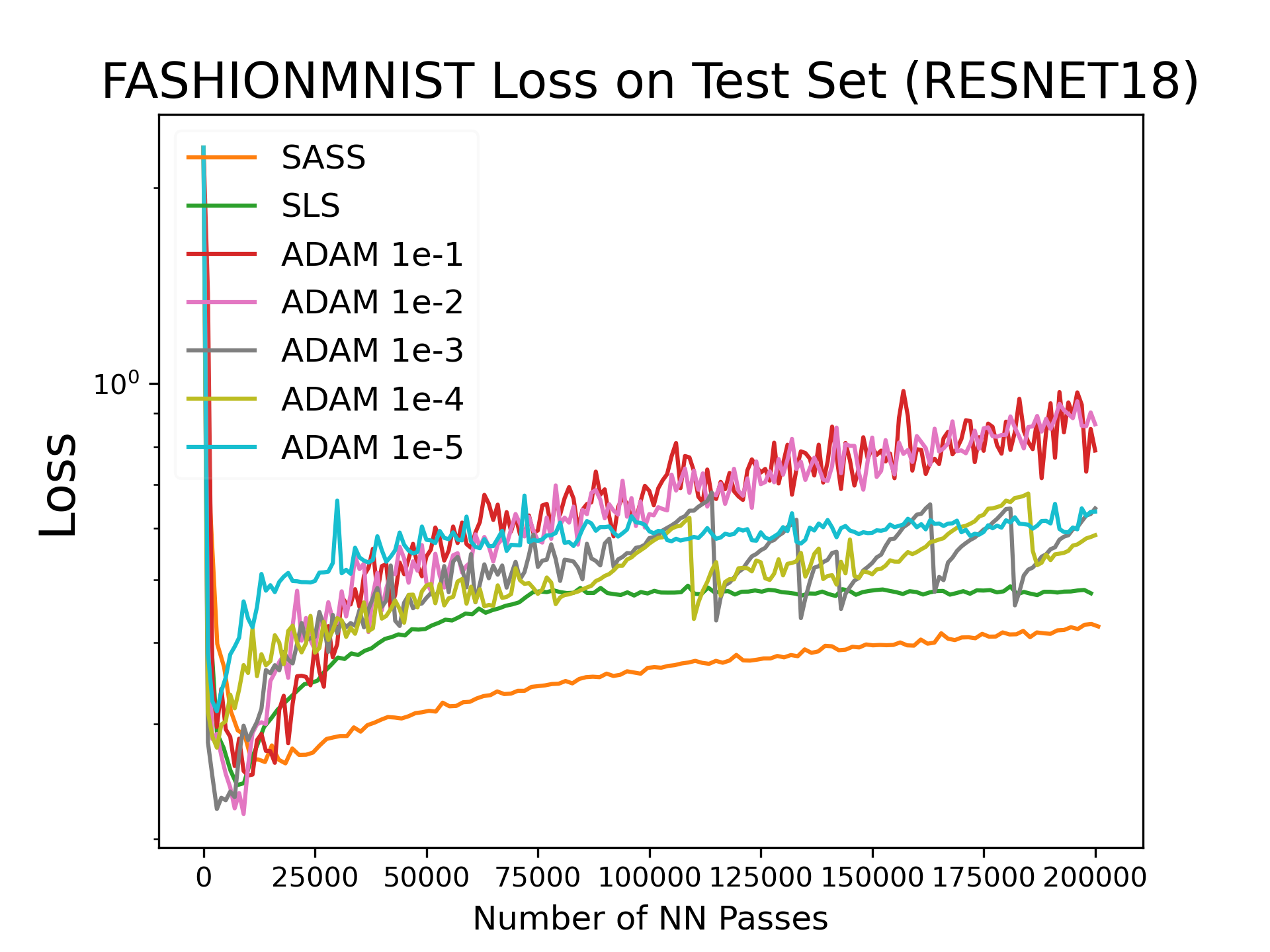}
\end{subfigure}
	\begin{subfigure}{.32\textwidth}
	\centering
	\includegraphics[trim=3 0 40 0, clip, width=\linewidth]{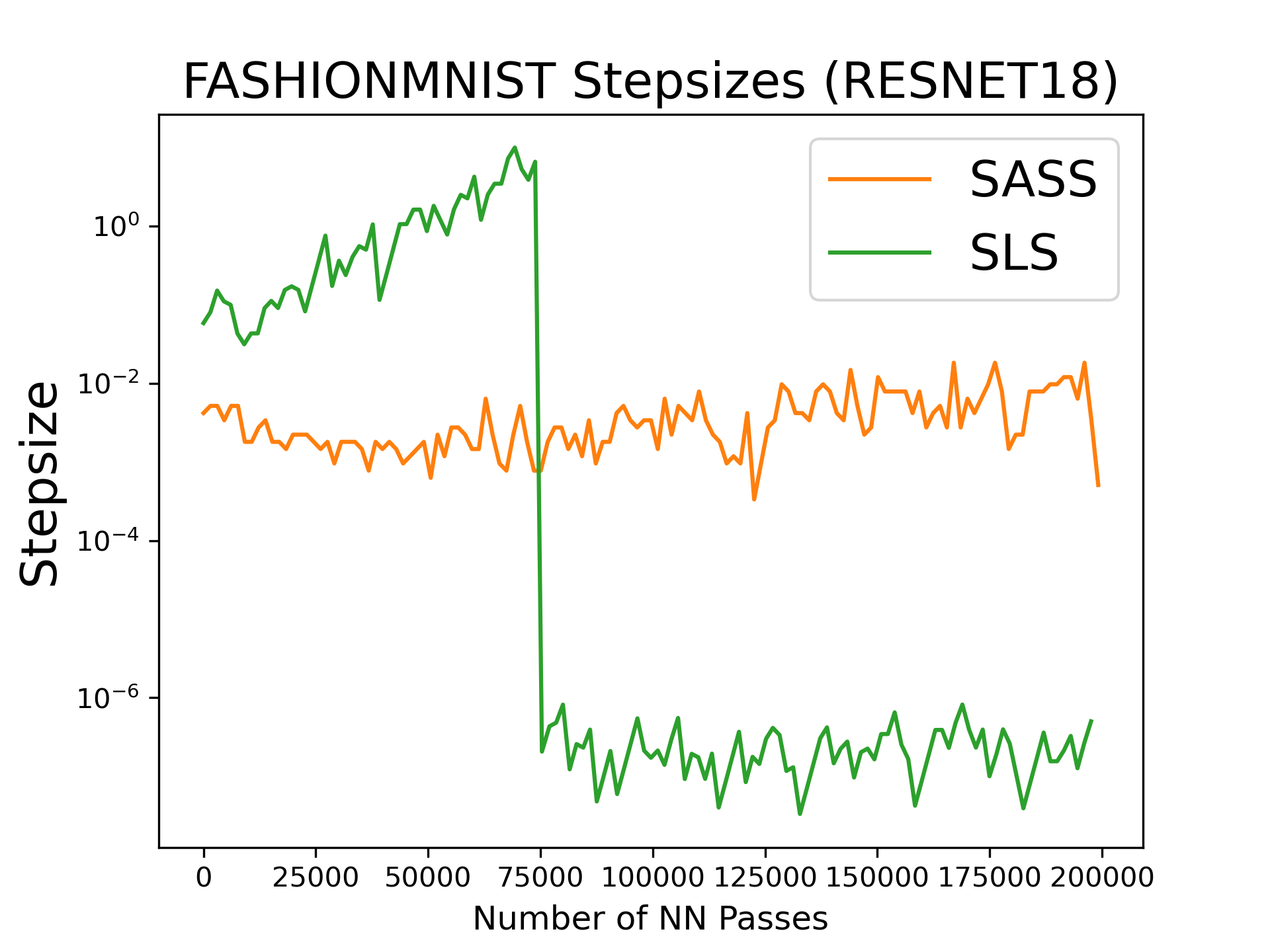}
\end{subfigure}
	\caption{The first two rows shows the results on MNIST with multi-layer perceptron neural network and CNN. The third row is for FashionMNIST with ResNet18.}
	\label{MNIST1}
	
\end{figure}

%
%
%


%
%

\section{Final Remarks}\label{sec:conclusion}
We conclude the paper with a brief overview of our theoretical results in comparison to prior literature. 

In this paper we have substantially extended complexity analysis and relaxed conditions for  step search methods based on stochastic oracles, compared to prior works in   \cite{CS17}, \cite{paquette2018stochastic} and \cite{berahas2019global}. 

The stochastic line search in \cite{vaswanie2019painless} is proposed specifically for empirical risk minimization, and the zeroth- and first-order oracles are implemented using   mini-batch of a fixed size. The same mini-batch is used for all consecutive unsuccessful iterations. This guarantees that a successful iteration  is eventually achieved for Armijo condition with  $\epsilon_f^\prime=0$, under the assumption that  for every mini-batch, $g(x,\xi^\prime)$ is Lipschitz continuous.
	The convergence analysis  then assumes that $M_c=0$ in \eqref{BCN} ({\em strong growth condition}) and in the case when $\phi$ is not convex,
	the step size parameter is bounded above by $\frac{1}{LM_v}$. Thus, the method itself and its convergence are not better than those of a stochastic gradient descent with a fixed step size bounded by $\frac{1}{LM_v}$    \cite{bottou2018optimization}.  
	It is also assumed that the step size is reset to a fixed value at the start of each iteration, which is impractical. Good  computational results are reported in \cite{vaswanie2019painless} for a heuristic version of the algorithm where the restrictions of the step size are removed.
	
	In this paper we analyzed Algorithm \ref{alg:ls} under no restriction on the step size parameter.  
	We also do not assume that  $g(x,\xi^\prime)$ is Lipschitz continuous, we only impose this condition on $\phi$. The cost of relaxing all these assumptions is the use of $\epsilon_f^\prime$. For simplicity of the analysis, $\epsilon_f^\prime$ is assumed to be fixed throughout the algorithm. In practice, it can be re-estimated regularly.  Our experiments show
	that estimating $\epsilon_f^\prime$ is easy and works well in practice. Moreover, one can use much smaller values for $\epsilon_f^\prime$ than theory dictates.

\section*{Acknowledgments}
We thank Jorge Nocedal  and Shigeng Sun for the useful discussion for an earlier version of this paper. This work was partially supported by NSF Grants CCF 20-08434, TRIPODS 17-40796 and ONR award N00014-22-1-215. 

\bibliographystyle{siamplain}
\bibliography{./references.bib}
	
\appendix

\end{document}